\numberwithin{equation}{section}
\newcommand{\E}{\mathbb {E}}
\newcommand{\be}{\begin{equation}}
\newcommand{\ee}{\end{equation}}
\newcommand{\R}{\mathbb R}
\newcommand{\e}{\mathcal {E}}
\renewcommand{\>}{\rangle}
\shortcourse\url{http://www.ams.org/meetings/short-courses/short-course-general#random%20fields}
\newtheorem{thm}{Theorem}[section]
\newtheorem{lem}[thm]{Lemma}
\newtheorem{prop}[thm]{Proposition}

\theoremstyle{definition}
\newtheorem{defn}{Definition}[section] 
\newtheorem{ex}{Example}[section]
\theoremstyle{remark}
\newtheorem{rmk}{Remark}[section]

\title{Fractional brownian fields over manifolds}
\author{Zachary A. Gelbaum}
\address{Oregon State University}
\email{gelbaumz@math.oregonstate.edu}
\subjclass[2010]{Primary 60G60, 60G15, 58J35}
\date{\today}                                           
\begin{document}

\begin{abstract}  Extensions of the fractional Brownian fields are constructed over a complete Riemannian manifold.  This construction is carried out for the full range of the Hurst parameter $\alpha\in(0,1)$.  In particular, we establish existence, distributional scaling (self-similiarity), stationarity of the increments, and almost sure H\"{o}lder continuity of sample paths.  Stationary counterparts to these fields are also constructed.
\end{abstract}
\maketitle
\section{Introduction}
The fractional Brownian motions and their stationary counterparts are the basic examples of Gaussian random fields over $\R$ and it is natural to ask what are the corresponding examples when $\R$ is replaced by a manifold.  The first to do so was Paul L\'evy (see \cite{MR0190953}), who extended the standard Brownian motion on $\R$ to the standard Brownian field over $\R^d$, now called L\'evy's Brownian motion.  L\'evy then extended this field to the sphere $\mathbb S^d$.  Since then there have been a number of studies aimed at extending both the Brownian motion and the fractional Brownian motion to other manifolds.  This is a natural step in the theory of Gaussian fields in general as one would like to understand how the structure of the index set determines the kinds of fields that can be defined over it.  The geometric and topological structure of Riemannian manifolds make them a convenient and interesting setting for such a study.  When one extends the fractional Brownian motions from $\R$ to $\R^d$ the resulting fields are called \textit{Euclidean fractional Brownian fields} (some authors prefer \textit{L\'evy fractional Brownian motions}) and our purpose in this article is to construct fields over Riemannian manifolds that generalize the Euclidean fractional Brownian fields.

Much of the interest in the fractional Brownian fields ($fBf$'s) over $\R^d$ stems from their distributional invariance and scaling properties.  In particular, if $\alpha\in (0,1)$ denotes the Hurst index and the corresponding field is denoted by $fBf^\alpha$, the increments of the $fBf^\alpha$ are invariant under rotation and translation and the distribution of the $fBf^\alpha$ scales by a power $c^\alpha$ when $\R^d$ is dilated by $c>0$.  Any extension of the $fBf$'s should possess these properties and also reflect the geometry of the index set in question (for an introduction to Gaussian random fields over manifolds focusing on smooth fields see the recent work of Adler and Taylor \cite{MR2319516} and the lecture notes from the short course given at the 2012 joint meetings of the AMS\footnote[1]{\shortcourse}).

As mentioned above the first attempt to extend L\'evy's Brownian motion, $fBf^\frac 12$, from $\R^d$ to a manifold was by L\'evy himself in \cite{MR0190953}.  There he constructed a field over $\mathbb S^d$ with covariance given by \[d(x,o)+d(y,o)-d(x,y),\]  $d(x,y)$ being the geodesic distance between $x$ and $y$ and $o$ being a fixed origin point on the sphere.  Further progress in this direction was made in the work of Molchan (see e.g$.$ \cite{MR913723}) and Gangolli (see \cite{MR0215331}) where the authors dealt with extensions of L\'evy's Brownian motion to other manifolds including the sphere.

Most recently Istas in \cite{MR2198600} studied fields over certain Riemannian manifolds with covariance given by \be \frac12\left(d(x,o)^{2\alpha}+d(y,o)^{2\alpha}-d(x,y)^{2\alpha}\right)\ee where $d(x,y)$ is the metric of the manifold and $o$ is a chosen point.  In particular Istas showed there that (1.1) defines a Gaussian field over compact rank one symmetric spaces and hyperbolic space $\mathbb H^d$ if and only if $\alpha\in(0,1/2]$.  

A common feature of the above approaches is that they begin by looking for covariances of the form $f(x,o)+f(y,o)-f(x,y)$ for some symmetric function $f$; the idea being that over $\R^d$ $o=0$ and $f(x,y)=\|x-y\|_{\R^d}$.  The issue then is to prove that the function so definined does, in fact, define a covariance, i.e$.$, one must establish positive definiteness.  A necessary and sufficient condition for positive definiteness is that $f$ be of \textit{negative type}, for example one can take the above approach on metric spaces $(X,d)$ with metric of negative type (e.g$.$ \cite{MR2266715, MR1855636}).  In general if $d(x,y)$ is the metric of a Riemannian manifold, establishing that $d(x,y)^{2\alpha}$ is of negative type for some $\alpha\in(0,1)$ is non-trivial and indeed, as in \cite{MR2198600}, it has been shown $d(x,y)^{2\alpha}$ can fail to be of negative type.  Moreover, in all the above work this approach necessitates symmetry assumptions on the underlying manifold.

In the present article we take an essentially different approach inspired by the work of Benassi, Jaffard, and Roux (see \cite{MR1462329} and more recently \cite{MR2053563}).  In particular we extend a characterization of the $fBf^\alpha$ in terms of the Laplacian on $\R^d$ to the Riemannian setting via the Laplace-Beltrami operator and the associated heat kernel.  Using this approach we are able to extend the $fBf^\alpha$ to a variety of both compact and non-compact manifolds without any assumptions regarding symmetry of the manifolds and for the full range of $\alpha\in(0,1)$ (see Theorems 3.2-3.4 below).  

Broadly speaking, in order to build a Gaussian random field over a manifold (or any index set) there are two things we must do:  Determine a covariance function and prove that this covariance determines a probability measure on a suitable space of functions, e.g., some space of continuous functions.  If we build our covariance correctly the resulting field will have the properties we would like, and we will be able to use some theorems from probability to show that we get a good probabilistic model, that is, a well defined random element of an appropriate function space. 

This article is structured as follows: in Section 2 we cover some preliminaries regarding Gaussian random fields and analysis on manifolds, in particular the heat kernel of a Riemannian manifold.  In Section 3.1 we describe the motivation behind our approach and define our candidate covariance functions before we study conditions which ensure these covariances exist for a given manifold in section 3.2.  Section 3.3 deals with almost sure sample path regularity and in 3.4 we establish the appropriate distributional invariance properties.  In Section 4 we construct stationary counterparts to the fields of Section 3 and establish the corresponding distributional and sample path properties.  Section 5 contains some open questions concerning geometry and probability encountered in the course of the article and in the appendix we collect some necessary results concerning sample path regularity of Gaussian fields over manifolds.

\section{Preliminaries }
\subsection{Gaussian Random Fields}
Given a complete probability space $(\Omega,\mathcal F,P)$ and some index set $I$ we call a collection of random variables on $\Omega$, $\{X_i(\omega)\}_{i\in I}$, a \textit{Gaussian random field (GRF) over $I$} if for any finite subset $\{i_k\}_1^n\subset I$ the random vector $(X_{i_k})_1^n$ has a joint normal distribution.  Then for each $\omega\in\Omega$, $X_i(\omega)$ defines a real valued function on $I$ called a \textit{sample path} of the field $\{X_i\}$.  We let $\E$ denote the expectation operator, \[\E[X_i]\equiv\int_\Omega X_i(\omega)\,dP(\omega)\qquad i\in I\] and we call \[\E[(X_s-\E[X_s])(X_t-\E[X_t])=\E[X_sX_t]-\E[X_s]\E[X_t]\qquad s,t\in I\] the \textit{covariance} of $\{X_i\}$.  The covariance of a GRF over $I$ defines a symmetric positive definite function on $I\times I$. 

We say two GRF's are equal in \textit{finite dimensional distribution} or simply \textit{in distribution}, denoted $\stackrel{d}{=}$, if their covariances are equal.  We also say two GRF's defined on the same probability space are \textit{versions} of each other if $P(X_i=Y_i)=1$ for all $i\in I$.  The salient analytical feature of GRF's is that for any set $I$ the collection of all GRF's over $I$ is in one to one correspondence up to equality in distribution with the set of all symmetric, positive definite functions on $I\times I$.  In other words a GRF is uniquely determined in distribution by its covariance and every symmetric positive definite function $K$ on $I\times I$ is the covariance of a GRF over $I$, that is, there exists some complete probability space $(\Omega,\mathcal F,P)$ and a $GRF$ $\{X_i(\omega)\}_I$ where for each $i\in I$ $X_i$ is a random variable on $\Omega$.

 We call a GRF \textit{centered} if $\E[X_i]=0$ $\forall i\in I$ and in this case its covariance is given by $\E[X_tX_s]$, $s,t\in I$.  Throughout this article we will only consider centered GRF's.

\subsubsection{The Euclidean Fractional Brownian Fields}  The standard Brownian motion $B_t$ over $[0,\infty)$ is the centered GRF with covariance \[\E[B_sB_t]=s\wedge t=\frac {|s|+|t|-|t-s|}2.\]  From this one generalizes to obtain the fractional Brownian motion $fBm^\alpha$ for $\alpha\in (0,1)$: \[\E[fBm^\alpha_sfBm^\alpha_t]=\frac {|s|^{2\alpha}+|t|^{2\alpha}-|t-s|^{2\alpha}}2.\]   We then have $B_t=fBm^\frac12$.

One then further generalizes to $\R^d$, obtaining the $fBf^\alpha$ as the centered GRF over $\R^d$ with covariance \[\E[fBf^\alpha_xfBf^\alpha_y]=\|x\|_{\R^d}^{2\alpha}+\|y\|_{\R^d}^{2\alpha}-\|x-y\|_{\R^d}^{2\alpha}\] (note that some authors include the constant factor $1/2$).  We remark here that throughout the article we will make a slight abuse of notation and use $\R^d$ to refer both to the usual vector space and to Euclidean space as a manifold, though we doubt this will cause much confusion as the context will make clear what is meant.

One easily sees that the $fBf^\alpha$ is \textit{self similar} of order $\alpha$, i.e$.$, if $fBf^\alpha_c$ denotes the field rescaled field $\{fBf^\alpha_{cx}\}_{x\in\R^d}$ then \[fBf^\alpha_{c}\stackrel{d}{=}c^\alpha fBf^\alpha\qquad\forall\, c>0,\]  and that it has \textit {stationary (or homogeneous) increments}: \[\E[|fBf^\alpha_x-fBf^\alpha_y|^2]=\|x-y\|^{2\alpha}=\|\iota(x)-\iota(y)\|^{2\alpha}=\E[|fBf^\alpha_{\iota(x)}-fBf^\alpha_{\iota(y)}|^2]\] for any isometry $\iota$ on $\R^d$.  Moreover it is known that there exists a version $X_x$ of the $fBf^\alpha$ such that with probability one the sample paths $X_x(\omega)$ are H\"{o}lder continuous of any order $\gamma<\alpha$ and fail to be H\"{o}lder continuous of any order $\gamma>\alpha$ at every point in $\R^d$ (see \cite{MR611857}).

\subsubsection{White Noise}
The treatment here follows \cite{MR1474726}.  Given a probability space $(\Omega,\mathcal F,P)$ we call a complete subspace $G$ of $L^2(\Omega,\mathcal F,P)$ a \textit{Gaussian Hilbert space} if every element of $G$ is a centered Gaussian random variable.  Note that the inner product $H$ inherits from $L^2(\Omega,\mathcal F,P)$ is then \[\<X,Y\>_G=\E[XY].\]  

Given any (real) Hilbert space $H$ there exists a Gaussian Hilbert space $G$ and a unitary map $W:H\to G$ called $W$ the \textit{isonormal process} or \textit{white noise process} on $H$ (one can also consider complex white noises).  If, as is the case below, $H=L^2(M,\mathcal S,d\mu)$ for some measure space $(M,\mathcal S,d \mu)$ then if $B=\{A\in\mathcal S:\mu(A)<\infty\}$ the map from $B\to G$ given by \[W(A)\equiv W(\chi_A)\] determines a Gaussian random measure on $M$.  The properties of such measures will not be important for us here, but we mention them to motivate the notation for $W:H\to G$, given by\[W(f)=\int_M f(z)\,dW(z),\] which we refer to as a \textit{white noise integral} (this is also commonly called a \textit{stochastic integral}).  Starting from a random measure one can construct the integral $\int_M dW$ in close analogy with classical measure theory.  All that will be important for us is the property \[\<f,g\>_H=\E\left[\int_M f\,dW,\int_M g\,dW\right].\]

Now suppose we have a function $h(x,z): M\to L^2(M,d\mu)$, $x\mapsto h(x,z)\in L^2(M,d\mu(z))$.  We can then define a centered GRF $Y_x$ over $X$ by \[Y_x\stackrel{d}{=}\int_Mh(x,z)\,dW(z).\]  The covariance of $Y_x$ is then given by \[\E[Y_xY_y]=\<h(x,z),h(y,z)\>_{L^2}=\int_Mh(x,z)h(y,z)\,d\mu(z).\]  Note that the last expression on the right is in fact positive definite and symmetric.  In this case we call $h$ the integral kernel of $Y$.

\subsection{Analysis on Manifolds}
In what follows we assume throughout that all Riemannian manifolds are complete and of dimension $d$, with $2\leq d<\infty$.  For a manifold $M$ let $\Delta$ denote the Laplace-Beltrami operator, or simply the Laplacian for short, on $M$.  In any local coordinate system the action of $\Delta$ on $C^\infty(M)$ is given by\[\Delta=\frac1{\sqrt{g}}\sum \partial_j\left(g^{ij}\sqrt{g}\partial_i\right)\] where $(g_{ij})$ is the matrix of the Riemannian metric in these coordinates, $(g^{ij})=(g_{ij})^{-1}$, and $\sqrt{g}=\left(det(g_{ij})\right)^\frac12$.  Because $M$ is complete, $\Delta$ is essentially self adjoint (see e.g$.$ \cite{MR705991}) and so we may consider from now on the unique minimal self-adjoint extension of $\Delta$, which we shall write as $\Delta$ also.  Moreover the spectrum of $\Delta$ is contained in $(-\infty,0]$ (see e.g. \cite{MR705991}).  By the spectral theorem we can define the heat semigroup \[e^{t\Delta}=\int_0^\infty e^{-t\lambda}\,dE_\lambda\] where $dE_\lambda$ is the spectral measure of $-\Delta$.  The action of $e^{t\Delta}$ on $L^2(M,dV_g)$, where $dV_g$ denotes the measure derived from the metric $g$, is given by a kernel $H_t(x,y)$: \[e^{t\Delta}(f)(x)=\int_MH_t(x,y)f(y)\,dV_g(y).\]  $H_t(x,y)$ is called the \textit{heat kernel} of $M$.  It is known that $H_t$ is strictly positive, symmetric, and contained in $C^\infty(M\times M\times (0,\infty))$.  Moreover we have the semigroup property \[\int_MH_t(x,z)H_s(z,y)\,dV_g(z)=H_{t+s}(x,y).\]  As a consequence $H_t$ is positive definite for each $t>0$.  As its name suggests, $H_t(x,y)$ is a fundamental solution to the heat equation on $M\times (0,\infty)$:\[\begin{cases}\left(\frac{\partial}{\partial t}-\Delta_x\right)H_t(x,y)=0\\\lim_{t\downarrow0}\int_MH_t(x,y)f(x)\,dx=f(y)\quad\forall\, f\in C_0(M).

\end{cases}\]

There are various constructions of the heat kernel, that given in \cite{MR615626} being most suited to our purposes.  In particular if we let \[\mathcal \e_t(x,y)\equiv \frac{e^{-\frac{d(x,y)^2}{4t}}}{\sqrt{(4\pi t)^d}}\]  then there is an open neighborhood of the diagonal $U\subset M\times M$ such that on $U$ \be\frac{H_t(x,y)}{\e_t(x,y)}=\Phi(t,x,y)\ee where $\Phi(t,x,y)$ is symmetric in $x$ and $y$, $\Phi\in C^k([0,T]\times U)$ $\forall$ $T>0$ where $k$ can be chosen arbitrarily large (see \cite{MR768584} and \cite{MR0282313}), and \[\lim_{t\to0, \,x\to y}\Phi(t,x,y)=1.\]  In other words, for $x$ and $y$ close $H_t\sim \e_t$ as $t\to0$.  Thus on any manifold heat diffusion behaves locally for small times as in Euclidean space.

If $M$ is compact then we also have the following eigenfunction expansion of $H_t$:\be H_t(x,y)=\sum_{k=0}^\infty e^{-\lambda_kt}\phi_k(x)\phi_k(y)\ee where $0=\lambda_0<\lambda_1\leq...\leq\lambda_k\uparrow\infty$ and $\{\phi_k\}$ are the spectrum and orthonormalized $L^2$ eigenfunctions of $-\Delta$ respectively and where (2.2) converges absolutely and uniformly for each $t>0$ (see \cite{MR768584}).

Following \cite{MR768584} we define a \textit{regular domain} to be an open, connected, relatively compact subset $D$ of a complete Riemannian manifold such that $\partial D\neq\emptyset$ is smooth.  In what follows when we refer to the Laplacian of a regular domain we mean the Dirichlet Laplacian with corresponding the heat kernel (see \cite{MR768584}, Chapter 7).  As in the compact case we have an eigenfunction expansion (2.2), the only difference being that $\lambda_0>0$.  If $(M,g)$ is a regular domain in manifold $(N,g)$ then, as noted in \cite{MR1849187}, (2.1) holds in this setting as well.

Now suppose $M$ is complete and non-compact, $\{D_k\}_1^\infty$ is any increasing exhaustion of $M$ by regular domains, and $H^k_t(x,y)$ denotes the Dirichlet heat kernel of $D_k$.  Then if we extend each $H^k$ to be zero outside $\overline D\times\overline D$, $\{H^k_t(x,y)\}_1^\infty$ forms a pointwise increasing sequence on $M\times M\times (0,\infty)$.  It was shown in \cite{MR711862} that \[\lim_{k\to\infty}H^k_t(x,y)=H_t(x,y)\] where $H_t(x,y)$ is the heat kernel defined above.

\section{The Riesz Fields}

\subsection{Motivation and Definition}  As mentioned in the introduction, our first task is to write down a candidate covariance for our fields.  We could write down all the properties we want our field to have and see if this determines a covariance, however even on $\R ^d$ this is non-trivial and as we shall see below, on a general manifold the properties of the Euclidean fractional Brownian fields described above do not uniquely determine a GRF.  The other strategy is to find a characterization of the Euclidean fields that suggests a generalization to manifolds and then verify that this ansatz does indeed yield a probability measure on a nice function space with the properties we want.  This is the strategy we will follow, and so our first task is to find a suitable characterization of the Euclidean field $fBf^\alpha$.  

In \cite{MR1462329} the authors begin by defining a symbol class of pseudodifferential operators over $\R^d$.  From such an operator $A$ they define a Gaussian random field with covariance given by the integral kernel of $A^{-1}$.  The authors are then able to derive all the important properties of this field from properties of the symbol of the operator $A$.  This approach to constructing and studying GRF's is a natural extension of the classical spectral theory of Gaussian processes on $\R$ and demonstrates of the power of the spectral point of view.

The basic heuristic can be described as follows:  Beginning with an unbounded operator $A$ on some $L^2$ space, define and study the GRF determined by the integral kernel of $A^{-1}$.  So in attempting to extend the $fBf^\alpha$ to a Riemannian manifold, we should first seek an operator $A$ that determines the $fBf^\alpha$ in the manner above.

Our starting point is the well known (e.g$.$ \cite{MR1462329} or \cite {MR893393}) spectral representation of the $fBf^\alpha$, \be fBf^\alpha_x\stackrel{d}{=}C_{d,\alpha}\int_{\R^d}\frac {e^{i\<x,\xi\>}-1}{\|\xi\|^{\frac d2+\alpha}}\,d\widehat{W}(\xi),\ee where $\widehat{W}$ is a complex white noise on $L^2(\R^d,dx)$, $dx$ is Lebesgue measure, and $C_{d,\alpha}$ is a constant.  Examining (3.1) we see that, up to a constant, for $f\in H_{-\left(\frac d4+\frac\alpha2\right)}(\R^d)$ \[\int_{\R^d}\frac {e^{i\<x,\xi\>}-1}{\|\xi\|^{\frac d2+\alpha}}\hat{f}(\xi)\,d\xi=(-\Delta)^{-(\frac d4+\frac\alpha2)}(f)(x)-(-\Delta)^{-(\frac d4+\frac\alpha2)}(f)(0).\]  Thus if we denote this last operator above by $A$ then the $fBf^\alpha$ is the unique (in distribution) GRF with covariance given by the Schwarz kernel of the operator $A^* A$,\[\E[fBf^\alpha_xfBf^\alpha_y]=C\int_{\R^d}\frac{e^{i\<x-y,\xi\>}-e^{i\<x,\xi\>}-e^{i\<y,\xi\>}+1}{\|\xi\|^{d+2\alpha}}\,d\xi.\]  

We now have a characterization that extends immediately to manifolds:  Simply replace the Laplacian on $\R^d$ by the Laplace-Beltrami operator of the manifold in question and determine the kernel of the operator $A^*A$.  Following \cite{MR705991} we arrive at the following definitions:  \begin{defn}For a complete Riemannian manifold $M$ with heat kernel $H_t(x,y)$ define the  \textit{Riesz field} $R^\alpha$ to be the GRF with covariance given by
\be\E[R^\alpha_xR^\alpha_y]\equiv\frac1{\Gamma\left(\frac d2+\alpha\right)}\int_0^\infty t^{\frac d2+\alpha-1}\left(H_t(x,y)-H_t(x,o)-H_t(y,o)+H_t(o,o)\right)\,dt\ee
where $o\in M$ is a fixed ``{origin}" and the \textit{stationary (or homogeneous) Riesz field} $hR^\alpha$ the GRF with covariance

\be\E[hR^\alpha_xhR^\alpha_y]\equiv\frac1{\Gamma\left(\frac d2+\alpha\right)}\int_0^\infty t^{\frac d2+\alpha-1}H_t(x,y)\,dt.\ee
\end{defn}
Because $H_t(x,y)$ is positive definite for each $t>0$ and \begin{align}\notag &H_t(x,y)-H_t(x,o)-H_t(y,o)+H_t(o,o)\\\notag&\quad=\int_M\left(H_{t/2}(x,z)-H_{t/2}(o,z)\right)\left(H_{t/2}(y,z)-H_{t/2}(o,z)\right)dV_g(z),\end{align} each of these expressions is symmetric and positive definite, and thus when the integrals exist each determines a GRF over $M$.  Of course the convergence of the above integrals is by no means obvious and our first task in Section 3.2 will be to determine manifolds for which they do converge.
\begin{rmk}  We will see shortly that if either (3.2) or (3.3) exist for some $\alpha_0\in(0,1)$ then it also exists for any $\alpha\in(0,\alpha_0)$.  We say $R^\alpha$ (resp. $hR^\alpha$) \textit{exists} for all $\alpha\in (0,b)$ if (3.2) (resp. (3.3)) is finite for all $\alpha\in(0,b)$, $b\leq1$, and all $x,y\in M$.\end{rmk}

It turns out (Proposition 3.5) that the Riesz field (3.2) extends the $fBf^\alpha$ and that they agree up to a constant in distribution over $\R^d$.  However we will also see that the stationary Riesz field has some claim to be an extension of the $fBf^\alpha$, for example over negatively curved manifolds, even though it does not exist on $\R^d$.

Now let $W$ denote the white noise over $L^2(M,dV_g)$.  We will show that when they exist the Riesz fields admit the following integral representations: 

\be R^\alpha_x\stackrel{d}{=}\frac1{\Gamma\left(\frac d4+\frac\alpha2\right)}\int_M\int_0^\infty t^{\frac d4+\frac\alpha2-1}\left(H_t(x,z)-H_t(o,z)\right)\,dt\,dW(z) \ee
and

\be hR^\alpha_x\stackrel{d}{=}\frac1{\Gamma\left(\frac d4+\frac\alpha2\right)}\int_M\int_0^\infty t^{\frac d4+\frac\alpha2-1}H_t(x,z)\,dt\,dW(z).\ee

The issue is whether or not the functions appearing in the above are in fact square integrable for each $x\in M$.  Let us consider this in detail, first for $hR^\alpha$:

Letting $h_{hR}(x,z)=\frac 1{\Gamma\left(\frac d4+\frac \alpha2\right)}\int_0^\infty t^{\frac d4+\frac\alpha2-1}H_t(x,z)\,dt$ we have

\begin{align}
\notag\<h_{hR}(x,z)&,h_{hR}(y,z)\>_{L^2}\\\notag&=\int_M\left(\frac1{\Gamma\left(\frac d4+\frac\alpha2\right)}\int_0^\infty t^{\frac d4+\frac\alpha2-1}H_t(x,z)\,dt\right)\\\notag&\quad\quad\times\left(\frac1{\Gamma\left(\frac d4+\frac\alpha2\right)}\int_0^\infty s^{\frac d4+\frac\alpha2-1}H_t(y,z)\,ds\right)dV_g(z)\\\notag&=\int_M\int_0^\infty\int_0^\infty\frac1{\Gamma\left(\frac d4+\frac\alpha2\right)^2}t^{\frac d4+\frac\alpha2-1}s^{\frac d4+\frac\alpha2-1}H_t(x,z)H_s(y,z)\,dt\,ds\,dV_g(z)\\\notag&=\int_0^\infty\int_0^\infty\frac1{\Gamma\left(\frac d4+\frac\alpha2\right)^2}t^{\frac d4+\frac\alpha2-1}s^{\frac d4+\frac\alpha2-1}\int_MH_t(x,z)H_s(y,z)\,dV_g(z)\,dt\,ds\\\notag&=\int_0^\infty\int_0^\infty\frac1{\Gamma\left(\frac d4+\frac\alpha2\right)^2}t^{\frac d4+\frac\alpha2-1}s^{\frac d4+\frac\alpha2-1}H_{t+s}(x,y)\,dt\,ds\\\notag&=\int_0^\infty\int_s^\infty\frac1{\Gamma\left(\frac d4+\frac\alpha2\right)^2}(t-s)^{\frac d4+\frac\alpha2-1}s^{\frac d4+\frac\alpha2-1}H_t(x,y)\,dt\,ds\\\notag&=\int_0^\infty\int_0^t\frac1{\Gamma\left(\frac d4+\frac\alpha2\right)^2}(t-s)^{\frac d4+\frac\alpha2-1}s^{\frac d4+\frac\alpha2-1}\,dsH_t(x,y)\,dt
\end{align}
by the positivity of $H_t(x,y)$ and the semigroup property. 

Next note that if $g(s)=\frac1{\Gamma\left(\frac d4+\frac\alpha2\right)}s^{\frac d4+\frac \alpha2-1}$ then \[\int_0^t\frac1{\Gamma\left(\frac d4+\frac\alpha2\right)^2}(t-s)^{\frac d4+\frac\alpha2-1}s^{\frac d4+\frac\alpha2-1}\,ds=g* g(t)\] where $*$ denotes the finite convolution $f* g(t)\equiv\int_0^tf(t-s)g(s)\,ds$.  If $\mathcal L$ denotes the Laplace transform we have the well known property $\mathcal L\left(f* g\right)=\mathcal L(f)\mathcal L(g)$.  Applying this to $g* g$ above we have \[\mathcal L(g* g)(s)=\left(\mathcal L(g)\right)^2(s)=\left(s^{-(\frac d4+\frac\alpha2)}\right)^2=s^{-(\frac d2+\alpha)}.\]  Then inverting $\mathcal L$ we obtain \[\frac1{\Gamma(\frac d2+\alpha)}t^{\frac d2+\alpha-1}=\mathcal L^{-1}\left(s^{-(\frac d2+\alpha)}\right)=\int_0^t\frac1{\Gamma\left(\frac d4+\frac\alpha2\right)^2}(t-s)^{\frac d4+\frac\alpha2-1}s^{\frac d4+\frac\alpha2-1}\,ds.\]  Substituting this into the integral defining $\<h_{hR}(x,z),h_{hR}(y,z)\>_{L^2}$ above yields \[\frac1{\Gamma(\frac d2+\alpha)}\int_0^\infty t^{\frac d2+\alpha-1}H_t(x,y)\,dt.\]  Thus whenever $hR^\alpha$ exists it is given by (3.5).

Turning now to (3.2), let $h_R(x,z)=\frac 1{\Gamma\left(\frac d4+\frac \alpha2\right)}\int_0^\infty t^{\frac d4+\frac\alpha2-1}\left(H_t(x,z)-H_t(o,z)\right)\,dt$.  Then \begin{align}\notag\|h_R&(x,z)\|^2_{L^2}\\\notag&\leq\int_M\int_0^\infty\int_0^\infty s^{\frac d4+\frac \alpha2-1}t^{\frac d4+\frac \alpha2-1}|H_t(x,z)-H_t(o,z)||H_s(x,z)-H_s(o,z)|\,ds\,dt\,dV_g(z)\\\notag&=\int_0^\infty\int_0^\infty s^{\frac d4+\frac \alpha2-1}t^{\frac d4+\frac \alpha2-1}\int_M|H_t(x,z)-H_t(o,z)||H_s(x,z)-H_s(o,z)|\,dV_g(z)\,ds\,dt\\\notag&\leq\int_0^\infty\int_0^\infty s^{\frac d4+\frac\alpha2-1}t^{\frac d4+\frac \alpha2-1}\|H_t(x,\cdot)-H_t(o,\cdot)\|_2\|H_s(x,\cdot)-H_s(o,\cdot)\|_2\,ds\,dt \\\notag&=\left(\int_0^\infty t^{\frac d4+\frac \alpha2-1}\|H_t(x,\cdot)-H_t(o,\cdot)\|_2\,dt\right)^2\\\notag&=\left(\int_0^\infty t^{\frac d4+\frac \alpha2-1}\sqrt{H_t(x,x)-2H_t(x,o)+H_t(o,o)}\,dt\right)^2.\end{align}

Recall that if $M$ is any Riemannian manifold then from (2.1) for any $x,y\in M$ we have that $H_t(x,y)=O(t^{-\frac d2})$ as $t\to0$.  So then \[\int_0^1 t^{\frac d4+\frac \alpha2-1}\sqrt{H_t(x,x)-2H_t(x,o)+H_t(o,o)}\,dt<\infty\] and \[\int_0^1 t^{\frac d2+\alpha-1}\left(H_t(x,x)-2H_t(x,o)+H_t(o,o)\right)\,dt<\infty\] for all $\alpha\in(0,1)$.

Next notice that if $\alpha+\epsilon<b$ \begin{align}\notag\int_1^\infty t^{\frac d4+\frac \alpha2-1}&\sqrt{H_t(x,x)-2H_t(x,o)+H_t(o,o)}\,dt\\\notag =&\int_1^\infty t^{\frac d4+\frac {\alpha}2+\epsilon-(1+\epsilon)}\sqrt{H_t(x,x)-2H_t(x,o)+H_t(o,o)}\,dt\\\notag\leq&\left(\int_1^\infty t^{-(1+\epsilon)}\,dt\right)^\frac12\left(\int_1^\infty t^{\frac d2+ \alpha+\epsilon-1}\left({H_t(x,x)-2H_t(x,o)+H_t(o,o)}\right)\,dt\right)^\frac12
\end{align} 
by Cauchy-Schwarz.  Thus if $R^\alpha$ exists for all $\alpha\in (0,b)$ we may interchange the order of integration as with $hR^\alpha$ to obtain \begin{align}\notag\<h_R(x,z)&,h_R(y,z)\>_{L^2}\\\notag&=\frac1{\Gamma\left(\frac d2+\alpha\right)}\int_0^\infty t^{\frac d2+\alpha-1}\left(H_t(x,y)-H_t(x,o)-H_t(y,o)+H_t(o,o)\right)\,dt\\\notag&=\E[R^\alpha_xR^\alpha_y]\end{align} 
for all such $\alpha$.

In either case of (3.2) or (3.3) we see that the integrands are continuous on $(0,\infty)$ so by (2.1) convergence depends only on the behavior of the integrand at infinity.  Thus the existence of both $R^\alpha_x$ and $hR^\alpha_x$ will depend on the large-time asymptotics of $H_t(x,y)$.  These depend on the manifold in question and we will treat distinct cases below. 
\subsection{Existence}
\subsubsection{The Compact Case}

We have the following:

\begin{thm} If $M$ is a compact Riemannian manifold, then the Riesz field of order $\alpha$ exists over $M$ for any $\alpha\in(0,1)$ and the stationary Riesz field does not exist over $M$ for any $\alpha\in(0,1)$.

\end{thm}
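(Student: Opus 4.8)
The plan is to reduce both claims to the large-time behaviour of the heat kernel and then read that behaviour off the eigenfunction expansion (2.2). As already observed at the end of Section 3.1, the integrands in (3.2) and (3.3) are continuous on $(0,\infty)$ and, by the small-time bound $H_t(x,y)=O(t^{-d/2})$ coming from (2.1), the corresponding integrals over $(0,1)$ are finite for every $\alpha\in(0,1)$. So in each case only the integral over $[1,\infty)$ is in question, and there (2.2) is available, with absolute and uniform convergence at each fixed $t>0$. Since $M$ is compact (and connected, since $\lambda_0<\lambda_1$ in (2.2)), the eigenvalue $\lambda_0=0$ is simple and its normalized eigenfunction is the constant $\phi_0\equiv V_g(M)^{-1/2}$, so $\phi_0(x)\phi_0(y)=V_g(M)^{-1}$ for all $x,y$.

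For the Riesz field the key point is that the $\lambda_0$-term cancels in the increment combination: since $\phi_0$ is constant,
\[
H_t(x,y)-H_t(x,o)-H_t(y,o)+H_t(o,o)=\sum_{k\geq1}e^{-\lambda_k t}\bigl(\phi_k(x)-\phi_k(o)\bigr)\bigl(\phi_k(y)-\phi_k(o)\bigr).
\]
For $t\geq1$ and $k\geq1$ one has $e^{-\lambda_k t}\leq e^{-\lambda_1(t-1)}e^{-\lambda_k}$ because $\lambda_k\geq\lambda_1$; pulling $e^{-\lambda_1(t-1)}$ out of the sum and applying Cauchy-Schwarz in $k$ bounds the remaining sum by
\[
C_{x,y}:=\sqrt{H_1(x,x)-2H_1(x,o)+H_1(o,o)}\,\sqrt{H_1(y,y)-2H_1(y,o)+H_1(o,o)}<\infty .
\]
Hence $\bigl|H_t(x,y)-H_t(x,o)-H_t(y,o)+H_t(o,o)\bigr|\leq C_{x,y}e^{\lambda_1}e^{-\lambda_1 t}$ for $t\geq1$, and since $\lambda_1>0$ the integral $\int_1^\infty t^{\frac d2+\alpha-1}\bigl|H_t(x,y)-H_t(x,o)-H_t(y,o)+H_t(o,o)\bigr|\,dt$ is finite for every $\alpha\in(0,1)$. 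Combined with finiteness over $(0,1)$, this shows (3.2) converges absolutely for all $x,y\in M$, i.e. $R^\alpha$ exists on $M$ for every $\alpha\in(0,1)$.

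For the stationary Riesz field the same decomposition exhibits the obstruction. Writing $H_t(x,y)=V_g(M)^{-1}+\sum_{k\geq1}e^{-\lambda_k t}\phi_k(x)\phi_k(y)$, an estimate entirely analogous to the one above gives $\bigl|\sum_{k\geq1}e^{-\lambda_k t}\phi_k(x)\phi_k(y)\bigr|\leq C'_{x,y}e^{-\lambda_1 t}\to0$, so there is $T=T_{x,y}$ with $H_t(x,y)\geq\tfrac12 V_g(M)^{-1}$ for all $t\geq T$. Since $\frac d2+\alpha-1>-1$ we have $\int_T^\infty t^{\frac d2+\alpha-1}\,dt=\infty$, whence
\[
\int_0^\infty t^{\frac d2+\alpha-1}H_t(x,y)\,dt\geq\frac1{2V_g(M)}\int_T^\infty t^{\frac d2+\alpha-1}\,dt=\infty
\]
for every $\alpha\in(0,1)$ and all $x,y\in M$. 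Thus (3.3) is never finite and $hR^\alpha$ does not exist on $M$.

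The only step needing real care is the tail estimate for $t\geq1$, i.e. upgrading ``absolute and uniform convergence of (2.2) at each fixed $t$'' into a uniform exponential-in-$t$ decay rate; but this is delivered precisely by the factorization $e^{-\lambda_k t}=e^{-\lambda_k}e^{-\lambda_k(t-1)}$ together with Cauchy-Schwarz, and the rest is bookkeeping. Note that only compactness is used, with no curvature or symmetry hypothesis, and that the same $t\geq1$ bound for the increment combination is exactly what justifies the interchange of integration discussed in Section 3.1, so that the representation (3.4) holds on every compact $M$.
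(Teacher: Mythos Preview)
Your proof is correct and follows essentially the same approach as the paper: reduce to large-time behaviour via (2.1), use the eigenfunction expansion (2.2) to kill the $\lambda_0$-term in the increment combination and obtain $O(e^{-\lambda_1 t})$ decay, and for $hR^\alpha$ observe $H_t(x,y)\to\mbox{Vol}(M)^{-1}>0$. The only difference is cosmetic: the paper bounds the diagonal $H_t(x,x)-2H_t(x,o)+H_t(o,o)=\sum_{k\geq1}e^{-\lambda_kt}|\phi_k(x)-\phi_k(o)|^2$ and implicitly invokes the Cauchy--Schwarz reduction already set up in Section~3.1, whereas you handle general $x,y$ directly and spell out the factorization $e^{-\lambda_kt}\leq e^{-\lambda_1(t-1)}e^{-\lambda_k}$ that justifies the $O(e^{-\lambda_1 t})$ claim.
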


\begin{proof}  Recall (2.2): \[ H_t(x,y)=\sum_{k=0}^\infty e^{-\lambda_kt}\phi_k(x)\phi_k(y).\]   We have \[H_t(x,x)-2H_t(o,x)+H_t(o,o)=\sum_{k=1}^\infty e^{-\lambda_kt}|\phi_k(x)-\phi_k(o)|^2=O(e^{-\lambda_1t})\qquad \forall\, x\in M\] and $\lambda_1>0$.  Then (3.2) is clearly finite for any $x\in M$ and all $\alpha\in(0,1)$. 

To see that $hR^\alpha_x$ does not exist on $M$ notice that $\lim_{t\to0}H_t(x,y)=\mbox{Vol}(M)^{-1}\neq0$\hfill\\ $\forall$ $x,y\in M$.

\end{proof}

\begin{thm} If $M$ is regular domain then $hR^\alpha$, and thus by linearity $R^\alpha$, exists for any $\alpha\in (0,1)$.\end{thm}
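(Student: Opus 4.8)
The plan is to reduce everything to showing that the integral (3.3) defining the covariance of $hR^\alpha$ converges at every pair $(x,y)\in M\times M$ and every $\alpha\in(0,1)$. Granting this, the existence of $R^\alpha$ is automatic: since $H_t>0$, each $t^{\frac d2+\alpha-1}H_t(\cdot,\cdot)$ is nonnegative, so the integrand of (3.2) is a difference of four such nonnegative functions whose integrals are all finite, and hence
\[
\E[R^\alpha_xR^\alpha_y]=\tfrac1{\Gamma(\frac d2+\alpha)}\big(\mathcal I(x,y)-\mathcal I(x,o)-\mathcal I(y,o)+\mathcal I(o,o)\big),\qquad \mathcal I(a,b):=\int_0^\infty t^{\frac d2+\alpha-1}H_t(a,b)\,dt,
\]
is finite. (Equivalently, since $h_R(x,z)=h_{hR}(x,z)-h_{hR}(o,z)$ one has $\|h_R(x,\cdot)\|_{L^2}\le\|h_{hR}(x,\cdot)\|_{L^2}+\|h_{hR}(o,\cdot)\|_{L^2}$, which is finite as soon as $hR^\alpha$ exists — this is the ``by linearity''.) As noted after Definition 3.1 the integrand of $\mathcal I(a,b)$ is continuous on $(0,\infty)$, so it suffices to control it separately near $t=0$ and near $t=\infty$.

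Near $t=0$ there is nothing new: (2.1) holds on regular domains, so $H_t(a,b)=O(t^{-d/2})$ as $t\downarrow0$ for every $a,b$, whence $t^{\frac d2+\alpha-1}H_t(a,b)=O(t^{\alpha-1})$ is integrable on $(0,1]$ precisely because $\alpha>0$ — the same small-time estimate already used in Section 3.1. The substantive point is the large-time decay, and this is the only place the hypothesis is used, through the single fact that the bottom Dirichlet eigenvalue satisfies $\lambda_0>0$. Using the eigenfunction expansion (2.2) — valid, and absolutely and uniformly convergent for each $t>0$ on regular domains exactly as in the compact case — we have for $t\ge1$ and every $k$ that $e^{-\lambda_k t}=e^{-\lambda_k}e^{-\lambda_k(t-1)}\le e^{-\lambda_k}e^{-\lambda_0(t-1)}$ since $\lambda_k\ge\lambda_0$ and $t-1\ge0$; summing and applying the Cauchy--Schwarz inequality together with the identity $H_1(z,z)=\sum_k e^{-\lambda_k}\phi_k(z)^2$ gives
\[
0<H_t(a,b)\le e^{-\lambda_0(t-1)}\sum_{k=0}^\infty e^{-\lambda_k}|\phi_k(a)||\phi_k(b)|\le e^{\lambda_0}\sqrt{H_1(a,a)H_1(b,b)}\;e^{-\lambda_0 t},\qquad t\ge1.
\]
Consequently $\int_1^\infty t^{\frac d2+\alpha-1}H_t(a,b)\,dt\le e^{\lambda_0}\sqrt{H_1(a,a)H_1(b,b)}\int_1^\infty t^{\frac d2+\alpha-1}e^{-\lambda_0 t}\,dt<\infty$, the exponential beating any polynomial precisely because $\lambda_0>0$. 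Combining the two estimates, $\mathcal I(a,b)<\infty$ for all $a,b\in M$ and all $\alpha\in(0,1)$, so $hR^\alpha$ exists, and then $R^\alpha$ exists by the first paragraph.

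The argument is short, and the only step that deserves any care — the ``main obstacle'', such as it is — is the passage from the termwise bound $e^{-\lambda_k t}\le e^{-\lambda_k}e^{-\lambda_0(t-1)}$ to the bound on $H_t(a,b)$ itself, which uses that $\sum_k e^{-\lambda_k}|\phi_k(a)||\phi_k(b)|<\infty$, i.e. the absolute convergence of (2.2) at $t=1$ recalled in Section 2 for regular domains. Apart from this the proof is the same book-keeping as the compact case of Theorem 3.2, the difference being that here, $\lambda_0$ being strictly positive, one does not need to isolate (and cancel) a $k=0$ term: $H_t(a,b)$ itself decays exponentially, which is exactly why $hR^\alpha$ now exists whereas it failed to on a compact manifold.
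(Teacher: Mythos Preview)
Your proof is correct and follows the same route as the paper: both use the Dirichlet eigenfunction expansion $H_t(x,y)=\sum_k e^{-\lambda_k t}\phi_k(x)\phi_k(y)$ together with $\lambda_0>0$ to obtain $H_t(x,y)=O(e^{-\lambda_0 t})$ as $t\to\infty$, from which convergence of (3.3) (and then (3.2) by linearity) is immediate. The paper's proof is the two-line version of yours; your explicit bound via $e^{-\lambda_k t}\le e^{-\lambda_k}e^{-\lambda_0(t-1)}$ and Cauchy--Schwarz simply spells out what the paper's $O(e^{-\lambda_0 t})$ means.
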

\begin{proof}  As above let \[H_t(x,y)=\sum_{k=0}^\infty e^{-\lambda_kt}\phi_k(x)\phi_k(y).\] Then $\lambda_0>0$ and $H_t(x,y)=O(e^{-\lambda_0t})$ for each $x,y\in M$.

\end{proof}

We note here that in either case above we may integrate term by term using the eigenfunction expansions of $H_t$ to obtain a series expression for the covariance of $R^\alpha$ and $hR^\alpha$ as follows:

For $R^\alpha$ and $M$ compact we have \begin{align}\notag\E[R^\alpha_xR^\alpha_y]&=\frac1{\Gamma\left(\frac d2+\alpha\right)}\int_0^\infty t^{\frac d2+\alpha-1}H_t(x,y)-H_t(x,o)-H_t(y,o)+H_t(o,o)\,dt\\\notag&=\frac1{\Gamma\left(\frac d2+\alpha\right)}\int_0^\infty t^{\frac d2+\alpha-1}\sum_{k=0}^\infty e^{-\lambda_kt}(\phi_k(x)-\phi_k(o))(\phi_k(y)-\phi_k(o))\,dt\\\notag&=\frac1{\Gamma\left(\frac d2+\alpha\right)}\int_0^\infty t^{\frac d2+\alpha-1}\sum_{k=1}^\infty e^{-\lambda_kt}(\phi_k(x)-\phi_k(o))(\phi_k(y)-\phi_k(o))\,dt
\\\notag&\leq\frac1{\Gamma\left(\frac d2+\alpha\right)}\left(\int_0^\infty t^{\frac d2+\alpha-1}\sum_{k=1}^\infty e^{-\lambda_kt}|\phi_k(x)-\phi_k(o)|^2\,dt\right)^\frac12\\\notag&\quad\times\left(\int_0^\infty t^{\frac d2+\alpha-1}\sum_{k=1}^\infty e^{-\lambda_kt}|\phi_k(y)-\phi_k(o)|^2\,dt\right)^\frac12\\\notag&=\left(\E[|R^\alpha_x|^2]\E[|R^\alpha_y|^2]\right)^\frac12, \end{align} which we know from above to be finite.

Then by dominated convergence we may integrate term by term to obtain
\begin{align}\notag\E[R^\alpha_xR^\alpha_y]&=\frac1{\Gamma\left(\frac d2+\alpha\right)}\sum_{k=1}^\infty\frac{\Gamma\left(\frac d2+\alpha\right)}{\lambda_k^{\frac d2+\alpha}}(\phi_k(x)-\phi_k(o))(\phi_k(y)-\phi_k(o))\\\notag&=\sum_{k=1}^\infty(\lambda_k)^{-\left(\frac d2+\alpha\right)}(\phi_k(x)-\phi_k(o))(\phi_k(y)-\phi_k(o)).\end{align}  In particular \[R^\alpha_x\stackrel {d}=\sum_{k=1}^\infty (\lambda_k)^{-\left(\frac d4+\frac\alpha2\right)}(\phi_k(x)-\phi_k(o))\xi_k\] where $\{\xi_k\}$ is an i.i.d$.$ collection of standard normal random variables, the series converging in $L^2(M)$ almost surely.

The same equality holds for $M$ a regular domain if we number the spectrum as $\{\lambda_k\}_1^\infty$.  Similar arguments show that for $M$ a regular domain \[\E[hR^\alpha_xhR^\alpha_y]=\sum_{k=1}^\infty(\lambda_k)^{-\left(\frac d2+\alpha\right)}\phi_k(x)\phi_k(y)\] and \[hR^\alpha_x\stackrel {d}=\sum_{k=1}^\infty (\lambda_k)^{-\left(\frac d4+\frac\alpha2\right)}\phi_k(x)\xi_k.\]
\begin{ex}Let $M=\mathbb S^2$.  Then in terms of the spherical harmonics $\{Y_{km}\}$ we have \[H_t(x,y)=\sum_{k=0}^\infty e^{-k(k+1)t}\sum_{m=-k}^{k}Y_{km}(x)Y_{km}(y).\]  Applying the harmonic addition formula we have \[H_t(x,y)=\sum_{k=0}^\infty e^{-k(k+1)t}\frac{2k+1}{4\pi}P_k(\cos\theta_{xy})\] where $P_k$ is the $k$-th Legendre Polynomial and $\<x,y\>=\cos\theta_{xy}$.  Fixing an origin point $o\in \mathbb S^2$ we then have 
\[\E[R^\alpha_xR^\alpha_y]=\sum_{k=1}^\infty\left(k(k+1)\right)^{-\left(\frac d2+\alpha\right)}\frac{2k+1}{4\pi}\left(P_k(\cos\theta_{xy})-P_k(\cos\theta_{xo})-P_k(\cos\theta_{yo})+P_k(1)\right).\]

\end{ex}
\begin{ex} Let $M=\mathbb D=\{x\in \mathbb R^2:|x|<1\}$ and $J_k$ the Bessel function of the first kind of order $k$, $k=0,1,2..$.  Then if $\lambda_k^1<\lambda_k^2<...$ are the positive zeroes of $J_k$, using polar coordinates on $\mathbb D$ we have \[\E[hR^\alpha_{(r,\theta)}hR^\alpha_{(R,\phi)}]=\frac{\sqrt2}{\pi}\sum_{k,l}\frac{(\lambda_k^l)^{-(d+2\alpha)}}{|J_{k+1}(\lambda_k^l)|}J_k(\lambda_k^lr)J_k(\lambda_k^lR)\left(\cos(k(\theta-\phi))+\sin(k(\theta+\phi))\right).\]

\end{ex}
\subsubsection{The Non-Compact Case}

For the case of $M$ non-compact, first let us show by example that we cannot establish existence in general.

\begin{ex}Let $M=\mathbb S^1\times\R$.  Then we have \[H^M_t((\theta,x),(\phi,y))=H^{\mathbb S}_t(\theta,\phi)H^\R_t(x,y)\] where $H^M$ is the heat kernel of $M$, $H^\mathbb S$ is the heat kernel of $\mathbb S^1$, and $H^\R$ is the usual heat kernel on $\R$ (see \cite{MR2569498}, Theorem 9.11).

We then have that \begin{align}\notag H^M_t((\theta,x),(\theta,x))-2H^M_t((\theta,x),(\phi,y)+H^M_t((\phi,y),(\phi,y))&\sim \frac1{\pi}\frac{1-e^{\frac{-|x-y|^2}{4t}}}{\sqrt{(4\pi t)}}\\\notag&=O(t^\frac 32)\quad\text{as}\,\, t\to\infty\end{align} for any $(\theta,x),(\phi,y)\in M$.  So $\E[|R^\alpha_p|^2]=\infty$ $\forall\, p\in M$ and $\alpha\geq1/2$ and thus $R^\alpha$ does not exist over $M$ for this range of $\alpha$.  Using $\mathbb S^2$ instead in the above we have that $R^\alpha$ fails to exist for all $\alpha\in(0,1)$.

\end{ex}

However, for certain manifolds such that $\mbox{Vol}(M)<\infty$ we have a situation similar to the compact case:

\begin{thm}  Suppose $M$ is non-compact with $Ric(M)\geq-\kappa^2$, $\kappa\in \R$, and $\mbox{Vol}(M)<\infty$.  Let $\overline\lambda(M)=\inf_{\Omega\subset M}\left\{\lambda_1:\sigma(\Omega)=\{\lambda_k\}_{k=0}^\infty\right\}$ where the infimum is taken over regular domains $\Omega\subset M$ and $\sigma(\Omega)$ denotes the Dirichlet spectrum of $\Omega$. Then if $\overline\lambda(M)>0$ $R^\alpha$ exists over $M$ for any $\alpha\in (0,1)$ and $hR^\alpha$ does not.
\end{thm}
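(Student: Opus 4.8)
The plan is to reduce, via the analysis of Section 3.1, to a statement about the large-$t$ behaviour of the heat kernel, and then to extract that behaviour from a spectral gap. Recall from Section 3.1 that since $H_t=O(t^{-d/2})$ as $t\to 0$ the integrands in (3.2) and (3.3) are harmless near $0$ for every $\alpha\in(0,1)$; moreover, once $\E[|R^\alpha_x|^2]<\infty$ (equivalently $h_R(x,\cdot)\in L^2$) for each $x$, the Cauchy--Schwarz estimate there shows (3.2) converges for all $x,y$ and defines the covariance. Since $\E[|R^\alpha_x|^2]=\frac1{\Gamma(\frac d2+\alpha)}\int_0^\infty t^{\frac d2+\alpha-1}g_t\,dt$ with $g_t:=H_t(x,x)-2H_t(x,o)+H_t(o,o)$, it suffices to prove that $g_t$ decays exponentially in $t$ for each fixed $x$: then $t^{\frac d2+\alpha-1}g_t$ is integrable on $[1,\infty)$ for \emph{every} $\alpha$, so $R^\alpha$ exists for all $\alpha\in(0,1)$. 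For $hR^\alpha$ I will instead show $H_t(x,y)\to\mathrm{Vol}(M)^{-1}>0$ as $t\to\infty$; since $d\ge2$ forces $\frac d2+\alpha-1\ge\alpha>-1$, the tail of $\int t^{\frac d2+\alpha-1}H_t(x,y)\,dt$ diverges and $hR^\alpha$ exists for no $\alpha$.

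The exponential decay comes from splitting off the constants. As $\mathrm{Vol}(M)<\infty$, the constant function lies in $L^2(M,dV_g)$ and spans $\ker(-\Delta)$ ($M$ connected); let $P_0f=\mathrm{Vol}(M)^{-1}\int_M f\,dV_g$ be the corresponding projection, so $e^{t\Delta}=P_0+e^{t\Delta}(I-P_0)$. The hypothesis $\mathrm{Ric}\ge-\kappa^2$ guarantees stochastic completeness -- as does completeness together with $\mathrm{Vol}(M)<\infty$ -- hence $e^{t\Delta}1=1$ and the kernel of $e^{t\Delta}(I-P_0)$ is $\widetilde H_t(x,y):=H_t(x,y)-\mathrm{Vol}(M)^{-1}$, which is symmetric, orthogonal to constants in each variable, and satisfies the semigroup identity. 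The constant cancels in $g_t$, so $g_t=\|\widetilde H_{t/2}(x,\cdot)-\widetilde H_{t/2}(o,\cdot)\|_2^2\le 2\widetilde H_t(x,x)+2\widetilde H_t(o,o)$, and it suffices to bound $\widetilde H_t(p,p)$. Put $\mu:=\inf\sigma\bigl(-\Delta|_{\{1\}^\perp}\bigr)$, so that $e^{s\Delta}$ has operator norm $e^{-\mu s}$ on $\mathrm{Ran}(I-P_0)$. Since $\widetilde H_t(p,\cdot)=e^{(t-1)\Delta}\widetilde H_1(p,\cdot)$ with $\widetilde H_1(p,\cdot)\in\mathrm{Ran}(I-P_0)$, we get $\widetilde H_t(p,p)=\|\widetilde H_{t/2}(p,\cdot)\|_2^2\le e^{-\mu(t-2)}\widetilde H_2(p,p)\le e^{-\mu(t-2)}H_2(p,p)$ for $t\ge2$, and $H_2(p,p)<\infty$. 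Hence $g_t\le 2\bigl(H_2(x,x)+H_2(o,o)\bigr)e^{-\mu(t-2)}$, which is the required decay provided $\mu>0$; likewise $|H_t(x,y)-\mathrm{Vol}(M)^{-1}|=|\widetilde H_t(x,y)|\le\|\widetilde H_{t/2}(x,\cdot)\|_2\|\widetilde H_{t/2}(y,\cdot)\|_2\to0$ gives the convergence needed for the $hR^\alpha$ claim.

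Everything therefore hinges on the implication $\overline\lambda(M)>0\Rightarrow\mu>0$, and I expect this to be the main obstacle: the eigenfunction expansions (2.2) on an exhausting sequence of regular domains $D_k$ do not settle it directly, because the ground-state eigenvalue $\lambda_0(D_k)$ decreases to $\inf\sigma(-\Delta)=0$ and its term contributes no decay, so the ground state must be disentangled from the rest -- which is exactly what $P_0$ does at the level of $-\Delta$ on $M$. The link with $\overline\lambda(M)$ is a min--max argument; I would prove $\mu\ge\overline\lambda(M)$ (in fact $\mu=\overline\lambda(M)$). Given $\epsilon>0$, choose $\varphi\in C_c^\infty(M)$ with $\|\varphi\|_2=1$, $\int_M\varphi\,dV_g=0$, and $\int_M|\nabla\varphi|^2\,dV_g\le\mu+\epsilon$ (possible since $\mu$ is the bottom of the spectrum on $\{1\}^\perp$), and choose cutoffs $\eta_R$ of the constant $\mathrm{Vol}(M)^{-1/2}$ with $\int_M|\nabla\eta_R|^2\,dV_g\le CR^{-2}\mathrm{Vol}(M)\to0$, $\|\eta_R\|_2\to1$, $\langle\eta_R,\varphi\rangle\to0$ -- here the finiteness of the volume is precisely what makes the cutoff energy vanish. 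For $R$ large both functions are supported in some $D_k$ and span a two-dimensional subspace of $H^1_0(D_k)$ on which every Rayleigh quotient is $\le\mu+2\epsilon$; by the variational characterization of the second Dirichlet eigenvalue, $\lambda_1(D_k)\le\mu+2\epsilon$, hence $\overline\lambda(M)=\inf_k\lambda_1(D_k)\le\mu+2\epsilon$, and letting $\epsilon\downarrow0$ gives $\overline\lambda(M)\le\mu$. Thus $\overline\lambda(M)>0$ forces $\mu\ge\overline\lambda(M)>0$, completing the argument. The reverse inequality $\mu\le\overline\lambda(M)$, obtained by using the first two Dirichlet eigenfunctions of $D_k$ (extended by zero) as trial functions for $-\Delta$ on $M$, yields the sharp decay rate $e^{-\overline\lambda(M)t}$ but is not needed for existence.
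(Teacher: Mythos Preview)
Your proof is correct and considerably more detailed than the paper's. The paper dispatches the theorem in two sentences: it notes $\lim_{t\to\infty}H_t(x,y)=\mathrm{Vol}(M)^{-1}\neq0$ to exclude $hR^\alpha$, and for $R^\alpha$ it simply cites \cite{MR1482041} for the estimate $H_t(x,y)-\mathrm{Vol}(M)^{-1}=O\bigl(e^{-\overline\lambda(M)t/2}\bigr)$, which immediately makes (3.2) converge. You instead prove the exponential decay from scratch: project off the constants, control $\widetilde H_t(p,p)=\|\widetilde H_{t/2}(p,\cdot)\|_2^2$ by the operator norm $e^{-\mu t}$ of $e^{t\Delta}$ on $\{1\}^\perp$, and then link the abstract spectral gap $\mu$ to the geometric quantity $\overline\lambda(M)$ via a two-dimensional min--max using a mean-zero test function and a low-energy cutoff of the constant (the finite volume being exactly what drives the cutoff energy to zero). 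This buys you a self-contained argument with no black-box citation, and in fact a sharper rate ($e^{-\mu t}$ with $\mu\ge\overline\lambda(M)$, versus the paper's $e^{-\overline\lambda(M)t/2}$). The paper's approach is shorter and offloads the analytic work; yours makes transparent where each hypothesis enters---Ricci lower bound (or finite volume) for stochastic completeness so that $e^{t\Delta}1=1$, finite volume so that $1\in L^2$ and the cutoff argument runs, and $\overline\lambda(M)>0$ for the gap.
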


\begin{proof}That $hR^\alpha$ does not exist follows from the fact that on such $M$ \[\lim_{t\to\infty}H_t(x,y)=\frac1{\mbox{Vol}(M)}\neq0\qquad\forall \,x,y\in M.\]  For $R^\alpha$, under the hypothesis of the theorem it was shown in \cite{MR1482041} that \[H_t(x,y)-\frac1{\mbox{Vol}(M)}=O\left(e^{-\frac{\overline\lambda(M)}2t}\right)\] and so (3.2) converges $\forall$ $\alpha\in(0,1)$.

\end{proof}

We now turn to our main existence theorem for the Riesz fields over non-compact manifolds followed by some examples.  Below we use the following notation: \[D_p(r)\equiv\{x\in M:d(x,p)< r\}\] and \[V_p(r)\equiv \mbox{Vol}\left(D_p(r)\right)=\int_{D_p(r)}dV_g.\]  We write $H_t=\overline O(t^{-\frac\nu2})$ if there exist two distinct points $x_k\in M$, $k=1,2$, and constants $C_k>0$ such that \[H_t(x_k,x_k)\leq C_kt^{-\frac\nu2}\qquad\forall\, t\geq1.\]  In that case using Theorem 1.1 of \cite{MR1443330} we know that for any $\delta>0$ there exists a constant $C_\delta>0$ such that for all $t\geq1 $ and all $x,y\in M$ \[H_t(x,y)\leq C_\delta t^{-\frac \nu2}e^{-\frac{d(x,y)^2}{(4+\delta)t}}.\]

\begin{thm}  Let $M$ be non-compact. 

(1)  Suppose $Ric(M)\geq0$.  Then $hR^\alpha$ does not exist for any $\alpha\in(0,1)$.  If \[H_t= \overline O\left(t^{-\left(\frac d2-\beta\right)}\right)\] and \[\varlimsup_{r\to\infty}\frac{V_x(r)}{r^{d-2\beta}}<\infty\qquad\forall\, x\in M\] for some $\beta\in[0,1)$ then $R^\alpha$ exists over $M$ for any $\alpha\in (0,1-\beta)$.

(2) Suppose that  \[H_t=\overline O \left(t^{-\left(\frac d2+\beta\right)}\right)\] for some $\beta>0$.  Then $hR^\alpha$ (and thus $R^\alpha$ also) exists for any $\alpha\in(0,\min\{\beta,1\})$.
\end{thm}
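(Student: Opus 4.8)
The plan is to prove the two parts separately, starting with part (2), which is the quicker.

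For part (2) and the non-existence claim in (1): for any complete $M$, (2.1) gives $H_t(x,x)=O(t^{-d/2})$ as $t\to0$, so $\int_0^1t^{\frac d2+\alpha-1}H_t(x,x)\,dt<\infty$ for every $\alpha>0$. The hypothesis $H_t=\overline O(t^{-(\frac d2+\beta)})$ together with Theorem 1.1 of \cite{MR1443330} (invoked exactly as recorded just above the statement, with $\nu=d+2\beta$) upgrades the on-diagonal decay to $H_t(x,y)\le C_\delta t^{-(\frac d2+\beta)}$ for all $x,y\in M$ and all $t\ge1$; hence $\int_1^\infty t^{\frac d2+\alpha-1}H_t(x,x)\,dt\le C_\delta\int_1^\infty t^{\alpha-\beta-1}\,dt<\infty$ precisely when $\alpha<\beta$. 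Thus $\E[|hR^\alpha_x|^2]<\infty$ for all $x$ and all $\alpha\in(0,\min\{\beta,1\})$; since $H_t(x,y)\le\sqrt{H_t(x,x)H_t(y,y)}$, Cauchy--Schwarz makes (3.3) absolutely convergent for all $x,y$, so $hR^\alpha$ exists, and then $R^\alpha$ does too because $R^\alpha_x\stackrel d=hR^\alpha_x-hR^\alpha_o$. For the non-existence of $hR^\alpha$ under $Ric(M)\ge0$ with $M$ non-compact, I would use the Li--Yau lower bound $H_t(x,x)\ge c_d/V_x(\sqrt t)$ and the Bishop--Gromov bound $V_x(\sqrt t)\le\omega_d t^{d/2}$ to get $H_t(x,x)\ge(c_d/\omega_d)t^{-d/2}$, whence $\E[|hR^\alpha_x|^2]\ge c\int_1^\infty t^{\alpha-1}\,dt=\infty$ for every $\alpha\in(0,1)$.

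The substantive part is the existence of $R^\alpha$ in (1). Fix $x\neq o$ (the case $x=o$ being trivial), put $\ell=d(x,o)$, and set
\[F(t):=H_t(x,x)-2H_t(x,o)+H_t(o,o)=\|H_{t/2}(x,\cdot)-H_{t/2}(o,\cdot)\|_{L^2}^2,\]
so $\E[|R^\alpha_x|^2]=\Gamma(\tfrac d2+\alpha)^{-1}\int_0^\infty t^{\frac d2+\alpha-1}F(t)\,dt$; the integral over $(0,1)$ is finite by the bound already established above, so only the tail needs attention. Let $\gamma:[0,1]\to M$ be a constant-speed minimizing geodesic from $o$ to $x$. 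Writing $H_t(x,z)-H_t(o,z)=\int_0^1\langle\nabla_1H_t(\gamma(\tau),z),\gamma'(\tau)\rangle\,d\tau$ and applying Minkowski's integral inequality in $L^2(dV_g(z))$ gives
\[\sqrt{F(t)}\le\ell\,\max_{0\le\tau\le1}\Big(\int_M|\nabla_1H_{t/2}(\gamma(\tau),z)|^2\,dV_g(z)\Big)^{1/2},\]
so it suffices to bound $I(s,p):=\int_M|\nabla_1H_s(p,z)|^2\,dV_g(z)$ uniformly for $p$ in the compact set $\gamma([0,1])$ and $s\ge1$. Here $Ric\ge0$ enters through the heat-kernel gradient estimate on nonnegatively curved manifolds, $|\nabla_1H_s(p,z)|\le C(s^{1/2}V_p(\sqrt s))^{-1}e^{-d(p,z)^2/(C's)}$ for $s\ge1$; combined with the Gaussian bound $\int_Me^{-2d(p,z)^2/(C's)}\,dV_g(z)\le C''V_p(\sqrt s)$ (valid since $Ric\ge0$ forces volume doubling at all scales) this gives $I(s,p)\le\widetilde C\,(s\,V_p(\sqrt s))^{-1}$. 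Finally, the assumed upper bound $\varlimsup_rV_x(r)/r^{d-2\beta}<\infty$ together with the matching lower bound $V_p(\sqrt s)\ge c\,s^{\frac d2-\beta}$ for $s\ge1$ — obtained by inserting the uniform estimate $H_s(p,p)\le C_\delta s^{-(\frac d2-\beta)}$ from \cite{MR1443330} into Li--Yau's $H_s(p,p)\ge c_d/V_p(\sqrt s)$ — give $V_p(\sqrt s)\asymp s^{\frac d2-\beta}$ for $s\ge1$, uniformly over $p\in\gamma([0,1])$. Hence $I(s,p)\lesssim s^{-1-(\frac d2-\beta)}$, so $F(t)\lesssim\ell^2\,t^{-1-(\frac d2-\beta)}$ for all large $t$, and $\int_1^\infty t^{\frac d2+\alpha-1}F(t)\,dt\lesssim\int_1^\infty t^{\alpha+\beta-2}\,dt<\infty$ exactly for $\alpha<1-\beta$. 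Finiteness of $\E[|R^\alpha_x|^2]$ for every $x$ then forces absolute convergence of (3.2) for all $x,y$, using $H_t(x,y)-H_t(x,o)-H_t(y,o)+H_t(o,o)=\langle H_{t/2}(x,\cdot)-H_{t/2}(o,\cdot),H_{t/2}(y,\cdot)-H_{t/2}(o,\cdot)\rangle$ and Cauchy--Schwarz, which is the asserted existence of $R^\alpha$.

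The hard part is the gradient step: one needs a clean heat-kernel gradient estimate with Gaussian off-diagonal decay on $Ric\ge0$ manifolds — the factor $s^{-1/2}$ it gains over the Gaussian heat-kernel bound is precisely what produces the extra power of $t$ in the decay of $F(t)$ — and one must identify the two-sided volume growth $V_p(\sqrt s)\asymp s^{\frac d2-\beta}$ rather than using only the hypothesized upper bound, in order to reach the sharp range $\alpha<1-\beta$; a weaker parabolic-Harnack (Hölder) estimate of exponent $\theta<1$ would only deliver $\alpha<\theta-\beta$.
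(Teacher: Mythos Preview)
Your proof is correct and rests on the same backbone as the paper's --- part~(2) and the non-existence claim in~(1) are handled essentially identically (the paper quotes the off-diagonal lower bound $H_t(x,y)\ge(4\pi t)^{-d/2}e^{-d(x,y)^2/4t}$ directly rather than composing Li--Yau with Bishop--Gromov, but the effect is the same). For the existence of $R^\alpha$ in~(1), both arguments hinge on a heat-kernel gradient estimate, but the implementations diverge: the paper applies the pointwise mean value theorem, producing $\int_M|\nabla_xH_t(\xi_z,z)|^2\,dV_g(z)$ with an intermediate point $\xi_z$ depending on $z$, substitutes the Gaussian gradient bound~(3.6), swaps the $t$- and $z$-integrals, evaluates the $t$-integral in closed form, and then controls the remaining $\int_{M\setminus D}d(\xi_z,z)^{-d+2\alpha+4\beta-2}\,dV_g$ via a dyadic annular decomposition together with the volume hypothesis $V_p(r)\lesssim r^{d-2\beta}$ and relative volume comparison $V_p(cr)\le c^dV_p(r)$. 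Your Minkowski inequality avoids the $z$-dependent intermediate point, and your estimate $\int_M e^{-2d(p,z)^2/(C's)}\,dV_g\le C''V_p(\sqrt s)$ via volume doubling compresses the annuli computation into one line, yielding directly $I(s,p)\lesssim (sV_p(\sqrt s))^{-1}$. Both routes land on $F(t)\lesssim t^{-1-d/2+\beta}$ and the same range $\alpha<1-\beta$. One by-product worth noting: in your version the hypothesis $\varlimsup_rV_x(r)/r^{d-2\beta}<\infty$ is never actually invoked --- only the volume \emph{lower} bound $V_p(\sqrt s)\gtrsim s^{d/2-\beta}$ enters, and you extract this from the heat-kernel upper bound via the Li--Yau on-diagonal lower estimate; the paper's annular sum genuinely consumes the volume upper bound. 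So your argument is tidier and, as written, marginally more general.
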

\begin{proof}(1):  To begin we note that our hypothesis $H_t= \overline O(t^{-( d/2-\beta)})$ implies the following gradient bound for $H_t$ (see \cite{MR1023321}): For all $x,y\in M$ and $t\geq1$
\be|\nabla_xH_t(x,y)|\leq C^\prime_\delta t^{-\left(\frac d2-\beta+\frac12\right)}e^{-\frac{d(x,y)^2}{(4+\delta)t}}\ee for some constant $C^\prime_\delta>0$.

Recall that by Cauchy-Schwarz in order for for (3.2) to converge it is sufficient to show that \[\int_1^\infty t^{\frac d2+\alpha-1}\left(H_t(x,x)-2H_t(x,o)+H_t(o,o)\right)\,dt<\infty\] for the specified range of $\alpha$.  Moreover, by first restricting to a compact subset $K\subset M$ we may assume positive injectivity radius, i.e., $\exists$ $r>0$ such that $d(x,y)<r$ implies that $x,y$ belong to some normal neighborhood.  By repeated use of the triangle inequality we see that existence for all such $x,y$ implies existence on all of $K$, and since $K$ was arbitrary, on all of $M$.  

To that end let $D=D_p(r)$ be a normal neighborhood containing $x$ and $o$.  We first apply the mean value theorem: \begin{align}\notag\int_1^\infty t^{\frac d2+\alpha-1}&\left(H_t(x,x)-2H_t(x,o)+H_t(o,o)\right)\,dt\\\notag= &\int_1^\infty t^{\frac d2+\alpha-1}\int_M|H_t(x,z)-H_t(o,z)|^2\,dV_g(z)\,dt\\\notag\leq &\,d(x,o)^2\int_1^\infty t^{\frac d2+\alpha-1}\int_M|\nabla_xH_t(\xi_z,z)|^2\,dV_g(z)\,dt\end{align} for some $\xi_z$ lying on some curve (parametrized to have unit velocity) contained in $D_p$ and joining $x$ and $o$.  We now apply (3.6),

\begin{align}\notag\int_1^\infty t^{\frac d2+\alpha-1}&\int_M|\nabla_xH_t(\xi_z,z)|^2\,dV_g(z)\,dt\\\notag\leq&\,C\int_1^\infty t^{-\frac d2+\alpha+2\beta-2}\int_Me^{-\frac{2d(\xi_z,z)^2}{(4+\delta)t}}\,dV_g(z)\,dt.\end{align}

We have \begin{align}\notag\int_1^\infty t^{-\frac d2+\alpha+2\beta-2}&\int_Me^{-\frac{2d(\xi_z,z)^2}{(4+\delta)t}}\,dV_g(z)\,dt\\\notag=&\int_1^\infty t^{-\frac d2+\alpha+2\beta-2}\int_{D}e^{-\frac{2d(\xi_z,z)^2}{(4+\delta)t}}\,dV_g(z)\,dt\\\notag&\quad+\int_1^\infty t^{-\frac d2+\alpha+2\beta-2}\int_{M\backslash D}e^{-\frac{2d(\xi_z,z)^2}{(4+\delta)t}}\,dV_g(z)\,dt\\\notag\leq&\,\mbox{Vol}(D)\int_1^\infty t^{-\frac d2+\alpha+2\beta-2}\,dt\\\notag&\quad+\int_{M\backslash D}\int_0^\infty t^{-\frac d2+\alpha+2\beta-2}e^{-\frac{2d(\xi_z,z)^2}{(4+\delta)t}}\,dt\,dV_g(z).
\end{align}

By hypothesis $\int_1^\infty t^{-\frac d2+\alpha+2\beta-2}\,dt<\infty$ so we only need to show \[\int_{M\backslash D}\int_0^\infty t^{-\frac d2+\alpha+2\beta-2}e^{-\frac{2d(\xi_z,z)^2}{(4+\delta)t}}\,dt\,dV_g(z)<\infty.\]  We have \begin{align}\notag&\int_{M\backslash D}\int_0^\infty t^{-\frac d2+\alpha+2\beta-2}e^{-\frac{2d(\xi_z,z)^2}{(4+\delta)t}}\,dt\,dV_g(z)\\\notag&\qquad=\left(\frac{4+\delta}2\right)^{\frac d2-\alpha-2\beta+1}\Gamma\left(\frac d2-\alpha-2\beta+1\right)\int_{M\backslash D}d(\xi_z,z)^{-d+2\alpha+4\beta-2}\,dV_g(z).\end{align} 

Recall $D=D_p(r)$ and let \[A_k=D_p(r+k)\backslash D_p(r+k-1)\qquad k=1,2,3...\]   By monotone convergence \begin{align}\notag\int_{M\backslash D}d(\xi_z,z)^{-d+2\alpha+4\beta-2}\,dV_g(z)&=\sum_{k=1}^\infty\int_{A_k}d(\xi_z,z)^{-d+2\alpha+4\beta-2}\,dV_g(z)\\\notag&\leq\sum_{k=1}^\infty\frac{\mbox{Vol}(A_k)}{(r+k-1)^{d-2\alpha-4\beta+2}}\\\notag&=\sum_{k=1}^\infty\frac{V_p(r+k)-V_p(r+k-1)}{(r+k-1)^{d-2\alpha-4\beta+2}}.\end{align}

Because $Ric(M)\geq0$ we have (see \cite{MR1103113} or \cite{MR658471}) \[V_p(cr)\leq c^dV_p(r)\qquad\forall\, r>0,\,c\geq1.\]  Thus \begin{align}\notag\sum_{k=1}^\infty\frac{V_p(r+k)-V_p(r+k-1)}{(r+k-1)^{d-2\alpha-4\beta+2}}&\leq \sum_{k=1}^\infty\frac{V_p(r+k-1)\left(\frac{(r+k)^d-(r+k-1)^d}{(r+k-1)^d}\right)}{(r+k-1)^{d-2\alpha-4\beta+2}}\\\notag&\leq C\sum_{k=1}^\infty\frac{(r+k-1)^{d-2\beta}\left(\frac{(r+k)^d-(r+k-1)^d}{(r+k-1)^d}\right)}{(r+k-1)^{d-2\alpha-4\beta+2}}\\\notag&=C\sum_{k=1}^\infty\frac{{(r+k)^d-(r+k-1)^d}}{(r+k-1)^{d-2\alpha-2\beta+2}}\end{align}

The convergence of this last sum is equivalent to that of \[\sum_{k=1}^\infty\frac{k^{d-1}}{k^{d-2\alpha-2\beta+2}}=\sum_{k=1}^\infty k^{2\alpha+2\beta-3}.\]   By hypothesis $\alpha<1-\beta$, which implies \[\sum_{k=1}^\infty k^{2\alpha+2\beta-3}<\sum_{k=1}^\infty k^{-(1+\epsilon)}<\infty\] for some $\epsilon>0$.  

To see that $hR^\alpha$ does not exist on $M$ for any $\alpha$, we note that (see e.g$.$ \cite{MR1103113})\[ Ric(M)\geq0\Rightarrow H_t(x,y)\geq (4\pi t)^{-\frac d2}e^{-\frac{d(x,y)^2}{4t}}\] for all $x,y\in M$ and $t>0$.  Thus \[\int_0^\infty t^{\frac d2+\alpha-1}H_t(x,y)\,dt=\infty\] for all $x,y\in M$ and any $\alpha\in(0,1)$.

To prove (2), simply write \[\int_1^\infty t^{\frac d2+\alpha-1}H_t(x,y)\,dt\leq C\int_1^\infty t^{\alpha-\beta-1}\,dt<\infty.\]

\end{proof}

We are now in a position to show that, over $\R^d$, $R^\alpha$ agrees up to a constant with the $fBf^\alpha$ in distribution.  We could do this abstractly using arguments along the lines of Section 3.1, however we can also make a simple explicit calculation.  Note that $\R^d$ satisfies the first hypothesis of Theorem 3.4 with $\beta=0$.  Thus $R^\alpha$ exists there and if we choose $o=0$ has covariance \[\E[R^\alpha_xR^\alpha_y]=\frac1{\Gamma(\frac d2+\alpha)}\int_0^\infty t^{\frac d2+\alpha-1}(H_t(0,0)-H_t(x,0)-H_t(y,0)+H_t(x,y))\,dt.\]

\begin{prop}  If $M=\R^d$ then $H_t(x,y)=\frac1{\sqrt{(4\pi t)^d}}{e^{\frac{-\|x-y\|^2}{4t}}}$ and for all $x,y\in\mathbb R^d$ and for $\alpha\in(0,1)$ \[\E[R^\alpha_xR^\alpha_y]=C_\alpha\left(\|x\|^{2\alpha}+\|y\|^{2\alpha}-\|x-y\|^{2\alpha}\right)\]
where $C_\alpha$ is the positive constant given by \[C_\alpha=\frac{-\Gamma(-\alpha)}{4^{\frac d2+\alpha}(\pi)^\frac d2\Gamma(\frac d2+\alpha)}.\]  
\end{prop}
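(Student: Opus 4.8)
The plan is to compute the integral (3.2) by brute force, using that on $\R^d$ the heat kernel is the explicit Gaussian $H_t(x,y)=(4\pi t)^{-d/2}e^{-\|x-y\|^2/(4t)}$. Taking $o=0$ and factoring out $(4\pi t)^{-d/2}$, the covariance in (3.2) becomes, up to the constant $\frac{(4\pi)^{-d/2}}{\Gamma(\frac d2+\alpha)}$, the scalar integral
\[
\int_0^\infty t^{\alpha-1}\Bigl(1-e^{-\|x\|^2/(4t)}-e^{-\|y\|^2/(4t)}+e^{-\|x-y\|^2/(4t)}\Bigr)\,dt .
\]
The point I would stress first is that the four summands in the integrand cannot be integrated separately, since both $\int_0^\infty t^{\alpha-1}\,dt$ and $\int_0^\infty t^{\alpha-1}e^{-a/(4t)}\,dt$ diverge; the right move is to regroup the integrand as $\bigl(1-e^{-\|x\|^2/(4t)}\bigr)+\bigl(1-e^{-\|y\|^2/(4t)}\bigr)-\bigl(1-e^{-\|x-y\|^2/(4t)}\bigr)$. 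For $a>0$ the function $1-e^{-a/(4t)}$ tends to $1$ as $t\to0$ and is $\sim a/(4t)$ as $t\to\infty$, so $\int_0^\infty t^{\alpha-1}(1-e^{-a/(4t)})\,dt$ is absolutely convergent exactly when $\alpha\in(0,1)$; this legitimizes splitting the integral into the three convergent pieces.

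Next I would evaluate the scalar integral $I(a):=\int_0^\infty t^{\alpha-1}\bigl(1-e^{-a/(4t)}\bigr)\,dt$. The substitution $s=a/(4t)$ turns it into $(a/4)^\alpha\int_0^\infty s^{-\alpha-1}(1-e^{-s})\,ds$. Integrating by parts with $g'(s)=s^{-\alpha-1}$, $g(s)=-\frac1\alpha s^{-\alpha}$ (the boundary terms vanish because $1-e^{-s}\sim s$ at $0$ and $s^{-\alpha}\to0$ at $\infty$, using $0<\alpha<1$) gives $\int_0^\infty s^{-\alpha-1}(1-e^{-s})\,ds=\frac1\alpha\int_0^\infty s^{-\alpha}e^{-s}\,ds=\frac{\Gamma(1-\alpha)}\alpha$, and the functional equation $\Gamma(1-\alpha)=-\alpha\,\Gamma(-\alpha)$ identifies this as $-\Gamma(-\alpha)$. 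Hence $I(a)=-\dfrac{\Gamma(-\alpha)}{4^\alpha}\,a^\alpha$.

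Finally, the regrouped integral is $I(\|x\|^2)+I(\|y\|^2)-I(\|x-y\|^2)=-\dfrac{\Gamma(-\alpha)}{4^\alpha}\bigl(\|x\|^{2\alpha}+\|y\|^{2\alpha}-\|x-y\|^{2\alpha}\bigr)$, and multiplying by $\dfrac{(4\pi)^{-d/2}}{\Gamma(\frac d2+\alpha)}$ yields $\E[R^\alpha_xR^\alpha_y]=C_\alpha\bigl(\|x\|^{2\alpha}+\|y\|^{2\alpha}-\|x-y\|^{2\alpha}\bigr)$ with $C_\alpha=\dfrac{-\Gamma(-\alpha)}{4^{d/2+\alpha}\pi^{d/2}\Gamma(\frac d2+\alpha)}$, since $4^\alpha(4\pi)^{d/2}=4^{d/2+\alpha}\pi^{d/2}$. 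Positivity of $C_\alpha$ is immediate: $\Gamma(-\alpha)=-\Gamma(1-\alpha)/\alpha<0$ for $\alpha\in(0,1)$ while $\Gamma(\frac d2+\alpha)>0$. I do not expect any genuine analytic obstacle here; the only step needing a word of care is the regrouping in the first paragraph, i.e.\ confirming absolute convergence of each piece $\int_0^\infty t^{\alpha-1}(1-e^{-a/(4t)})\,dt$ before invoking linearity of the integral, after which everything reduces to one substitution and the Gamma functional equation.
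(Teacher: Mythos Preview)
Your argument is correct, and it is more elementary than the paper's. After the same reduction to the scalar integral $\int_0^\infty t^{\alpha-1}(1-e^{-\|x\|^2/(4t)}-e^{-\|y\|^2/(4t)}+e^{-\|x-y\|^2/(4t)})\,dt$, the paper treats the integrand as a whole and views the integral as a Mellin transform: it introduces a cutoff $F_a(t)=\chi_{[a,\infty)}(t)-e^{-\|x\|^2/(4t)}-e^{-\|y\|^2/(4t)}+e^{-\|x-y\|^2/(4t)}$, computes the Mellin transform explicitly in the strip $-1<\Re(s)<0$ where the individual terms separate, analytically continues the resulting identity to $0<\Re(s)<1$, and then sends $a\to0$ by dominated convergence. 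Your route avoids the complex-analytic detour entirely: by regrouping as $(1-e^{-\|x\|^2/(4t)})+(1-e^{-\|y\|^2/(4t)})-(1-e^{-\|x-y\|^2/(4t)})$ you land on three integrals that are already absolutely convergent for $\alpha\in(0,1)$, and each is dispatched by one substitution plus one integration by parts, with the Gamma functional equation $\Gamma(1-\alpha)=-\alpha\,\Gamma(-\alpha)$ producing the same constant $-\Gamma(-\alpha)/4^\alpha$. The paper's approach has the virtue of being a template (Mellin transform plus analytic continuation) that one might reuse for related kernels, but for this particular computation your direct calculation is shorter and requires no appeal to analyticity. One trivial remark: you may want to note, as the paper does, that the cases $x=0$, $y=0$, or $x=y$ are immediate since then the corresponding $I(a)$ has $a=0$ and vanishes, consistent with $0^\alpha=0$.
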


\begin{proof}First note that if either $x=0$ or $y=0$ the result is trivial; thus we assume otherwise.  The integral defining $\E[R^\alpha_xR^\alpha_y]$ reduces to \[\frac1{\sqrt{(4\pi)^d}}\int_0^\infty t^{\alpha-1}(1-e^{\frac{-\|x\|^2}{4t}}-e^{\frac{-\|y\|^2}{4t}}+e^{\frac{-\|x-y\|^2}{4t}})\,dt,\] which we recognize as a Mellin transform.  Let $F_a(t)=\chi_{[a,\infty)}(t)-e^{\frac{-\|x\|^2}{4t}}-e^{\frac{-\|y\|^2}{4t}}+e^{\frac{-\|x-y\|^2}{4t}}$ with $a>0$.  Then $F_a(t)=O(t^{-1})$ as $t\to\infty$ and $F_a(t)=o(t^N)$ as $t\to0$ $\forall$ $N>0$.  Thus \[\int_0^\infty t^{s-1}F_a(t)\,dt\] converges absolutely for all $s\in \mathbb C$ with $\Re(s)<1$ and defines an analytic function there. 

On the other hand for $-1<\Re(s)<0$ we have by direct calculation that \[\int_0^\infty t^{s-1}F_a(t)\,dt=\frac{a^s}{s}+\frac{-\|x\|^{2s}-\|y\|^{2s}+\|x-y\|^{2s}}{4^{s}}\Gamma(-s).\]

By analytic continuation this last equality holds for $0<\Re(s)<1$ as well.   For such $s$ we have by dominated convergence \[\int_0^1t^{s-1}F_0(t)\,dt=\lim_{a\to0}\int_0^1 t^{s-1}F_a(t)\,dt.\]  Now for $a<1$  \[\int_1^\infty t^{s-1}F_a(t)\,dt=\int_1^\infty t^{s-1}F_0(t)\,dt\] and so, noting $F_0(t)\geq 0$, we have using dominated convergence \begin{align}\notag\int_0^\infty t^{s-1}F_0(t)\,dt&=\int_0^1t^{s-1}F_0(t)\,dt+\int_1^\infty t^{s-1}F_0(t)\,dt\\\notag&=\left(\lim_{a\to0^+}\int_0^1t^{s-1}F_a(t)\,dt\right)+\int_1^\infty t^{s-1}F_0(t)\,dt\\\notag&=\lim_{a\to0^+}\left(\int_0^1 t^{s-1}F_a(t)\,dt+\int_1^\infty t^{s-1}F_0(t)\,dt\right)\\\notag&=\lim_{a\to0^+}\int_0^\infty t^{s-1}F_a(t)\,dt\\\notag&
=\frac{-\|x\|^{2s}-\|y\|^{2s}+\|x-y\|^{2s}}{4^{s}}\Gamma(-s)\end{align} 
\end{proof}

\begin{ex}Suppose $M$ is non-compact with $Ric(M)\geq0$ and \[\varliminf_{R\to\infty}\frac {V_p(R)}{R^d}=\theta\in(0,1)\] for some $p\in M$ (cf$.$ the Bishop-Gromov comparison theorem).  Then $R^\alpha$ exists over $M$ for any $\alpha\in (0,1)$ and $hR^\alpha$ does not.  Indeed, in \cite{MR847950} it is shown that $H_t(x,y)=O(t^{-\frac d2})$ for every $x,y\in M$.  Theorem 3.4 applies once we note that for all $p\in M$ \[Ric(M)\geq0\Rightarrow V_p(R)\leq \omega_dR^d\qquad\forall\, R\geq0,\] $\omega_d$ being the volume of the unit ball in $\R^d$.
\end{ex}

\begin{ex}  If $M$ is simply connected with all sectional curvatures $K\leq k$ for some $k<0$ and $Ric(M)\geq-\kappa^2>-\infty$ then $hR^\alpha$ exists over $M$ for any $\alpha>0$.  For example this holds if $M=\mathbb H^d$, $d$-dimensional hyperbolic space.  This follows from \cite{MR0266100} in which it is shown that $\sigma(-\Delta))\subset[(d-1)^2\frac{|k|}{4},\infty)$, which in turn implies the following upper bound on $H_t$ (see \cite{ MR1209255}):\[H_t(x,y)\leq C e^{\frac{(d-1)^2kt}{4}}\qquad\forall\, t\geq1\] for some $C>0$ and all $x,y\in M$.  Theorem 3.4 then applies.

In particular for $M=\mathbb H^2$, letting $\rho=d(x,y)$ we have the well known formula \[H_t(x,y)=\frac{\sqrt2}{(4\pi t)^\frac32}e^{-\frac 14 t}\int_{\rho}^\infty\frac{se^{-\frac {s^2}{4t}}}{\cosh(s)-\cosh(\rho)}\,ds.\] Then \[\E[hR^\alpha_xhR^\alpha_y]=\frac{\sqrt2}{(4\pi)^\frac 32\Gamma\left(1+\alpha\right)}\int_0^\infty \int_{\rho}^\infty t^{\alpha-\frac 32}\frac{se^{-\frac {1+s^2}{4t}}}{\cosh(s)-\cosh(\rho)}\,ds\,dt.\]

\end{ex}

\begin{rmk}On negatively curved manifolds, $hR^\alpha$ can also be viewed as an extension of the $fBf^\alpha$ in the following way:  In Section 3.1 we saw how the covariance of the $fBf^\alpha$ is the integral kernel of the operator $A^*A$ where \[A(f)=(-\Delta)^{-(\frac d4+\frac\alpha2)}(f)(x)-(-\Delta)^{-(\frac d4+\frac\alpha2)}(f)(0),\] which can be seen as a correction to $(-\Delta)^{-(\frac d4+\frac\alpha2)}$ when this operator does not have an integral kernel.  However on manifolds with spectrum as in example 3.5 $(-\Delta)^{-(\frac d4+\frac\alpha2)}$ does have an integral kernel and no correction is needed.  So if we view the $fBf^\alpha$ as the GRF with covariance that is the kernel of the minimal correction to $(-\Delta)^{-(\frac d4+\frac\alpha2)}$ that yields an integral operator, then on such manifolds as above we obtain the $hR^\alpha$.  
\end{rmk}
\subsection{H\"{o}lder Regularity}   

Having done the analytical work to build our covariances and check when they exist, we now turn to verifying that these covariances do in fact define random fields with the desired properties.  The first of those properties is in some ways the most fundamental:  Do the corresponding GRF's define probability measures on nice function spaces?  What we shall see is that if $M$ is compact or a compact subset of a regular domain or non-compact manifold over which the Riesz fields exist, then with probability one they have continuous sample paths and thus they determine probability measures on $C(M)$.

If $M$ is any Riemannian manifold or regular domain with heat kernel $H_t(x,y)$ then the maximum principle implies \[H_t(x,y)\leq H_t(x,x) \qquad\forall\, x,y\in M\] with equality if and only if $y=x$.  We then have that \[H_t(x,x)-2H_t(x,y)+H_t(y,y)>0\qquad \forall\, y\neq x. \]  In particular $\E[|R^\alpha_x-R^\alpha_y|^2]$ and $\E[|hR^\alpha_x-hR^\alpha_y|^2]$ both define metrics on $M$ when they exist.

Note also that \[\E[|R^\alpha_x-R^\alpha_y|^2]=\E[|hR^\alpha_x-hR^\alpha_y|^2]\] when both exist.  In particular in the proof below we will not distinguish these two metrics as the context of the Theorem will make clear which is being discussed.

We are now in a position to prove the following:
\begin{thm}Let $M$ be a compact Riemannian manifold, a regular domain, or non-compact under the hypothesis of Theorem 3.4.  We then have the following:
\begin{enumerate}
\item If $M$ is compact then there exists a version, $\tilde R^\alpha$, of $R^\alpha$ such that with probability $1$ the sample paths of $\tilde R^\alpha$ are uniformly H\"older continuous of any order $\gamma<\alpha$ on $M$, and there exists a dense subset of $M$ such that with probability $1$ the sample paths of $\tilde R^\alpha$ fail to be H\"older continuous at these points for any $\gamma>\alpha$.
\item If $M$ is a regular domain or non-compact under the hypothesis of Theorem 3.4, then for any compact set $K\subset M$ there exists a version, $\tilde R^\alpha$, of $R^\alpha$ such that with probability $1$ the sample paths of $\tilde R^\alpha$ are uniformly H\"older continuous of any order $\gamma<\alpha$ on $K$, and there exists a dense subset of $K$ such that with probability $1$ the sample paths of $\tilde R^\alpha$ fail to be H\"older continuous at these points for any $\gamma>\alpha$.

\end{enumerate}
\end{thm}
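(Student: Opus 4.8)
The plan is to reduce both assertions to a single two-sided estimate on the canonical metric of the field and then invoke the general sample-path criteria for Gaussian fields over manifolds collected in the appendix. Put $\delta(x,y)^2:=\E[|R^\alpha_x-R^\alpha_y|^2]$. Expanding $\E[|R^\alpha_x-R^\alpha_y|^2]=\E[|R^\alpha_x|^2]-2\E[R^\alpha_xR^\alpha_y]+\E[|R^\alpha_y|^2]$ via (3.2), all terms involving the origin $o$ cancel, so that
\[
\delta(x,y)^2=\frac1{\Gamma\left(\frac d2+\alpha\right)}\int_0^\infty t^{\frac d2+\alpha-1}\left(H_t(x,x)-2H_t(x,y)+H_t(y,y)\right)dt ,
\]
a quantity independent of $o$ (hence equal to the canonical metric of $hR^\alpha$ wherever that field exists, as already noted) with nonnegative integrand, since $H_t(x,x)-2H_t(x,y)+H_t(y,y)=\|H_{t/2}(x,\cdot)-H_{t/2}(y,\cdot)\|_{L^2}^2$. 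It then suffices to prove: for every compact $K\subset M$ there are constants $0<c\le C$ and $r_0>0$ with $c\,d(x,y)^{2\alpha}\le\delta(x,y)^2\le C\,d(x,y)^{2\alpha}$ whenever $x,y\in K$ and $d(x,y)<r_0$. When $M$ is compact, $\delta$ is a continuous metric on the compact space $M\times M$ and $d$ is comparable to it away from the diagonal, so the two-sided bound then extends to all of $M\times M$ with new constants.

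To obtain this estimate I would split the integral at $t=1$. For $\int_0^1$, insert the local expansion (2.1), $H_t(x,y)=\e_t(x,y)\Phi(t,x,y)$ with $\Phi$ of class $C^k$, positive, symmetric, and $\Phi(0,x,x)=1$, valid on a fixed normal neighborhood; writing $\rho=d(x,y)$ this integral becomes a constant multiple of
\[
\int_0^1 t^{\alpha-1}\left[\bigl(\Phi(t,x,x)+\Phi(t,y,y)-2\Phi(t,x,y)\bigr)+2\Phi(t,x,y)\bigl(1-e^{-\rho^2/4t}\bigr)\right]dt .
\]
The second difference of the smooth symmetric $\Phi$ vanishes to first order in $y-x$ (the linear terms cancel by symmetry), hence is $O(\rho^2)$ uniformly for $t\in[0,1]$ and contributes $O(\rho^2)$; in the remaining term the substitution $u=\rho^2/4t$ turns $\int_0^1 t^{\alpha-1}(1-e^{-\rho^2/4t})\,dt$ into $(\rho^2/4)^\alpha\int_{\rho^2/4}^\infty u^{-1-\alpha}(1-e^{-u})\,du$, and for $0<\alpha<1$ this last integral lies between two positive constants once $\rho$ is small. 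Since $\Phi(t,x,y)$ is pinched between positive constants on the relevant compact set, the conclusion is $\int_0^1\cdots dt=\kappa_\alpha\,d(x,y)^{2\alpha}+O(d(x,y)^2)$ with $\kappa_\alpha>0$, uniformly on $K$. For $\int_1^\infty$, I would note that $G(x,y):=\int_1^\infty t^{\frac d2+\alpha-1}H_t(x,y)\,dt$ is smooth near the diagonal with locally uniformly bounded second derivatives: for $M$ compact or a regular domain this is immediate from the eigenfunction expansion, and in the non-compact case it follows by differentiating under the integral using the Gaussian upper and gradient bounds for $H_t$ available under the hypotheses of Theorem 3.4 (the admissible range of $\alpha$ there being exactly what makes the relevant integrals converge). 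Hence $\int_1^\infty\cdots dt=G(x,x)-2G(x,y)+G(y,y)=O(d(x,y)^2)$. Adding the two pieces and using $2\alpha<2$ to absorb the $O(d(x,y)^2)$ error into the $\kappa_\alpha d(x,y)^{2\alpha}$ main term for $d(x,y)$ small yields the two-sided bound; note that the lower bound only uses the $\int_0^1$ analysis, since the full integrand is nonnegative.

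With the metric estimate established, the two conclusions follow from the appendix. Since $R^\alpha$ is a centered Gaussian field, all moments of increments are powers of $\delta$, so the upper bound gives $\E|R^\alpha_x-R^\alpha_y|^p\le C_p\,d(x,y)^{\alpha p}$ for every $p$; covering $K$ (or $M$) by finitely many coordinate charts, applying the Kolmogorov--Chentsov criterion in each, and letting $p\to\infty$ produces a version whose sample paths are uniformly H\"older of every order $\gamma<\alpha$. For the failure statement I would fix a countable dense set $\{x_j\}$; the lower bound $\delta(x,x_j)^2\ge c\,d(x,x_j)^{2\alpha}$ together with the standard estimates on the oscillation of a Gaussian process near a point (recorded in the appendix; this is the classical mechanism behind the corresponding $fBf^\alpha$ statement, cf.\ \cite{MR611857}) shows that for each $j$, almost surely the path fails to be H\"older of any order $\gamma>\alpha$ at $x_j$; intersecting the countably many almost sure events gives the desired dense set.

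The main obstacle is the two-sided metric estimate, and specifically the lower bound: one must be sure that neither the smooth correction coming from $\Phi\not\equiv1$ in (2.1) nor the large-time contribution $G(x,x)-2G(x,y)+G(y,y)$ cancels the leading term $\kappa_\alpha d(x,y)^{2\alpha}$. Both are $O(d(x,y)^2)$ and hence harmless for $\alpha<1$, but making this precise with constants uniform over compact sets --- which rests on the uniformity of the expansion (2.1) on a compact neighborhood of the diagonal and of the heat-kernel bounds underpinning Theorem 3.4 --- is the delicate point; the remaining passage from the metric to H\"older regularity is routine once the appendix lemmas are available.
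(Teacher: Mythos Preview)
Your proposal is correct and follows essentially the same route as the paper: write $\delta(x,y)^2$ as the $t$-integral, split at $t=1$, use the local expansion (2.1) to extract the $\kappa_\alpha d(x,y)^{2\alpha}$ main term from $\int_0^1$, and show the $\int_1^\infty$ piece is $O(d(x,y)^2)$, then pass to H\"older regularity via a Gaussian continuity criterion.

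Two small differences are worth flagging. First, for the passage from the metric estimate to sample-path continuity the paper invokes its manifold Garsia theorem (Theorem~A.4) rather than Kolmogorov--Chentsov in charts; either works. Second, and more substantively, your treatment of the large-time piece in the non-compact case is slightly off: gradient bounds on $H_t$ do not by themselves give second-derivative bounds on $G(x,y)=\int_1^\infty t^{d/2+\alpha-1}H_t(x,y)\,dt$, which is what your ``differentiate under the integral'' argument needs. The paper handles the two subcases of Theorem~3.4 separately: under hypothesis~(1) it uses the identity $H_t(x,x)-2H_t(x,y)+H_t(y,y)=\|H_{t/2}(x,\cdot)-H_{t/2}(y,\cdot)\|_2^2$ (which you already recorded) together with the mean value theorem and the pointwise gradient bound $|\nabla_x H_t|\le C t^{-(d/2-\beta+1/2)}e^{-d(x,y)^2/(4+\delta)t}$ to get $O(d(x,y)^2)$ directly, without ever bounding $\partial^2 G$; under hypothesis~(2) it obtains genuine second-derivative control on $H_t$ via a parabolic Schauder estimate before integrating in $t$. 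Your square-identity observation is exactly the missing ingredient for case~(1), so the fix is immediate, but the sketch as written conflates the two mechanisms.
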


\begin{proof}In order to apply Theorem A.4 in the appendix we need to compare the metric $\E[|R^\alpha_x-R^\alpha_y|^2]$ (resp$.$ $\E[|hR^\alpha_x-hR^\alpha_y|^2]$) on $(M,g)$ with the metric $d(x,y)$ derived from $g$, in particular study the boundedness of \be \frac{\E[|R^\alpha_x-R^\alpha_y|^2]}{(d(x,y))^{2\gamma}}\ee for $d(x,y)$ small and $\gamma\in(0,1)$.  What we will show is that this ratio is unbounded if $\gamma>\alpha$ and approaches zero if $\gamma<\alpha$.  

Our approach to controlling (3.7) will be to split the integral defining $\E[|R^\alpha_x-R^\alpha_y|^2]$ into two parts: \begin{align}\notag \int_0^\infty t^{\frac d2+\alpha-1}&(H_t(x,x)-2H_t(x,y)+H_t(y,y))\,dt\\&=\int_0^1 t^{\frac d2+\alpha-1}(H_t(x,x)-2H_t(x,y)+H_t(y,y))\,dt\,\\&\quad+\int_1^\infty t^{\frac d2+\alpha-1}(H_t(x,x)-2H_t(x,y)+H_t(y,y))\,dt.\end{align} 

We start with (3.8).  Recall that in any case around any point $p$ there is a closed disk $D_p$  such that (2.1) holds with $\Phi\in C^k(\overline{D_p}\times\overline{ D_p}\times[0,T])$ where we can choose $k>2$ and $T>0$.  

As a consequence we have, denoting the integral (3.8) by $I_1$ and $d(x,y)$ by $\rho$,  \be I_1=(4\pi)^{-\frac d2}\int_0^1 t^{\alpha-1}(\Phi(t,x,x)+\Phi(t,y,y)-2\Phi(t,x,y)e^{-\frac {\rho^2}{4t}})\,dt.\ee  Because $\Phi\in C^k(\overline{D_p}\times\overline{ D_p}\times[0,T])$ with $k>2$ and is symmetric, by Lemma A.1 in the appendix, \[\Phi(t,x,x)+\Phi(t,y,y)-2\Phi(t,x,y)=O(\rho^2)\qquad\mbox{as } \rho\to0 \] uniformly for $t\in[0,1]$.  Thus we have \begin{align}\notag&\int_0^1 t^{\alpha-1}(\Phi(t,x,x)+\Phi(t,y,y)-2\Phi(t,x,y)e^{-\frac {\rho^2}{4t}})\,dt\\\notag&\quad=2\int_0^1t^{\alpha-1}\Phi(t,x,y)(1-e^{-\frac{\rho^2}{4t}})\,dt\\\notag&\qquad+\int_0^1t^{\alpha-1}(\Phi(t,x,x)+\Phi(t,y,y)-2\Phi(t,x,y))\,dt\\\notag&\quad   =2\int_0^1t^{\alpha-1}\Phi(t,x,y)(1-e^{-\frac{\rho^2}{4t}})\,dt+O(\rho^2)\end{align}

Because \[\varlimsup_{x\to y}\int_0^1t^{\alpha-1}\Phi(t,x,y)(1-e^{-\frac{\rho^2}{4t}})\,dt=\varlimsup_{x\to y}\rho^{2\alpha}\int_0^{\rho^{-2}}t^{\alpha-1}\Phi(\rho^2t,x,y)(1-e^{-\frac{1}{4t}})\,dt\] and \[\varlimsup_{x\to y}\int_0^{\rho^{-2}}t^{\alpha-1}\Phi(\rho^2t,x,y)(1-e^{-\frac{1}{4t}})\,dt<\infty,\]  \be{I_1}=O({\rho^{2\alpha}})=O({d(x,y)^{2\alpha}})\qquad\mbox{as }d(x,y)\to0\ee for $x,y\in {D_p}$.

For (3.9), which we denote $I_2$, we first deal with the case of $M$ compact. Using (2.2) we have for $t\geq1$ \begin{align}\notag H_t(x,x)-2H_t(x,y)+H_t(y,y)&=\sum_{k=0}^\infty e^{-\lambda_kt}|\phi_k(x)-\phi_k(y)|^2\\\notag&=\sum_{k=1}^\infty e^{-\lambda_kt}|\phi_k(x)-\phi_k(y)|^2\\\notag&\leq d(x,y)^2\sum_{k=1}^\infty e^{-\lambda_kt}\|\nabla\phi_k\|_\infty.\end{align}

Now we apply the following bound on $\|\nabla\phi_k\|_\infty$ (see \cite{MR2657840}): \[ \|\nabla\phi_k\|_\infty\leq C_M\lambda_k^{\frac {d+1}4}\] where $C_M$ is a constant depending only on $M$.  We then have \[H_t(x,x)-2H_t(x,y)+H_t(y,y)\leq C_Md(x,y)^2\sum_{k=1}^{\infty}e^{-\lambda_kt}\lambda_k^{\frac{d+1}4}=C_M d(x,y)^2O\left(e^{-\lambda_1 t}\right),\] which yields \be I_2\leq C_Md(x,y)^2\int_1^\infty t^{\frac d2+\alpha-1} O\left(e^{-\lambda_1 t}\right)\,dt=Cd(x,y)^2\ee as $\lambda_1>0$.

If $M$ is a regular domain then a similar argument using the corresponding bound (see \cite{MR2526794}) \[\|\nabla\phi_k\|_\infty\leq C_M\lambda_k^{\frac {d+1}4}\] for the Dirichlet eigenfunctions on $M$ we obtain (3.13) in this case as well.  Thus for either $M$ compact or a regular domain \[I_2=O\left(d(x,y)^2\right)\qquad \mbox{as } d(x,y)\to0.\]

Turning now to the case of $M$ non-compact, first suppose the first hypothesis of Theorem 3.4 is in force.  As in that proof we have, for $x,y$ contained in a sufficiently small geodesic disc, \begin{align}\notag\int_1^\infty t^{\frac d2+\alpha-1}&\left(H_t(x,x)-2H_t(x,y)+H_t(y,y)\right)\,dt\\\notag= &\int_1^\infty t^{\frac d2+\alpha-1}\int_M|H_t(x,z)-H_t(y,z)|^2\,dV_g(z)\,dt\\\notag\leq &\,d(x,y)^2\int_1^\infty t^{\frac d2+\alpha-1}\int_M|\nabla_xH_t(\xi_z,z)|^2\,dV_g(z)\,dt,\end{align} which was shown to be finite.

Next suppose the second hypothesis holds.  For this case we will use a Schauder estimate and Lemma A.1:  We choose a geodesic disc $D_p$ and let $ L$ be $\Delta$ in geodesic normal coordinates on $D_p$, $\mathbf{D}=exp^{-1}(D_p)$, $P=\partial_t-L$ on $C^\infty(\mathbf{D}\times(0,1))$, and $u(x',y',t)\in C^\infty(\mathbf{D}\times\mathbf{D}\times(0,1))$ be $H_t(x,y)$ in our chosen coordinates.  For any $T>0$ we then have \[Pu(x',y',t+T)=\partial_tu(x',y',t+T)-L_{x'}u(x',y',t+T)=0\] for each for all $x',y',t\in\mathbf{D}\times \mathbf{D}\times(0,1/2)$.  In other words, $u$ satisfies $Pu(x',y',t)=0$ on $\mathbf D\times (T,T+1/2)$ for each $y'\in\mathbf D$ and $T>0$.

Because $L$ is uniformly elliptic on $\mathbf{D}$ and its coefficients are all $C^\infty$ (and independent of $T$, $t$), using the Schauder estimate (Theorem 5 p.64 in \cite{MR0181836} and choosing $\alpha=1$) we obtain for each closed disk $\mathbf D_r$ contained in $\mathbf D$ a constant $K_r>0$ such that \[\sup_{(x',t)\in \mathbf D_r\times(0,1/2)}\left|\frac{\partial^2 u}{\partial x'_ix'_j}(x',y',t+T)\right|\leq K_r\sup_{(x',t)\in\mathbf {D}_r\times(0,1/2)}|u(x',y',t+T)|\] for each $i,j$ and $y'\in D_r$.  We then have \[\sup_{(x',y',t)\in\mathbf {D}_r\times\mathbf {D}_r\times(0,1/2)}\left|\frac{\partial^2 u}{\partial x'_ix'_j}(x',y',t+T)\right|\leq K_r\sup_{(x',y',t)\in\mathbf {D}_r\times\mathbf {D}_r\times(0,1/2)}|u(x',y',t+T)|.\]  We note that $K_r$ is independent of $T$ and by our hypothesis\hfill\\ $\sup_{(x,y)\in D_p\times D_p}H_t(x,y)\leq C t^{-(\frac d2+\beta)}$, $\beta>0$.  Thus, returning to $D_r=exp\,(\mathbf {D}_r)$, for all $T>1$ \[\sup_{(x,y,t)\in {D}_r\times {D}_r\times(0,1/2)}\left|\frac{\partial^2 H}{\partial x_ix_j}(x,y,t+T)\right|\leq CK_r T^{-(\frac d2+\beta)}.\]

Then applying Lemma A.1 and assuming without loss of generality we have chosen our disc $D_p$ such that the above estimates hold, we have

\begin{align}\notag&\int_1^\infty t^{\frac d2+\alpha-1}\left(H_t(x,x)-2H_t(x,y)+H_t(y,y)\right)\,dt\\\notag&\qquad\leq Cd(x,y)^2\int_1^\infty t^{\frac d2+\alpha-1}\sup_{\overline D_p\times \overline D_p}\left|\sum_{i,j=1}^d\frac{\partial^2 H}{\partial x_i\partial x_j}(t,x,y)\right|dt\\\notag&\qquad \leq Cd(x,y)^2\int_1^\infty t^{\frac d2+\alpha-1}(t-1/2)^{-(\frac d2+\beta)}\,dt\end{align}  
 for some $C>0$.
 By hypothesis $\beta>0$, so $\int_1^\infty t^{\frac d2+\alpha-1}(t-1/2)^{-(\frac d2+\beta)}\,dt<\infty$.  Lastly recall that when $hR^\alpha$ exists for $\alpha\in(0,b)$ for some $b>0$ then $R^\alpha$ does as well.  Moreover in that case \[\E[|R^\alpha_x-R^\alpha_y|^2]=\E[|hR^\alpha_x-hR^\alpha_y|^2],\] so in the second case of Theorem 3.4 the arguments above apply to $R^\alpha$ as well.
 
Thus in each case from the preceeding discussion we know that for each $p\in M$ there exists a closed disc $D_p$ centered at $p$ such that for all $\gamma\leq\alpha$ \[\E[|R^\alpha_x-R^\alpha_y|^2]\leq C_p (d(x,y)^{2\gamma})\] for some constant $C_p>0$ and all $x,y\in D_p$ and that such a condition fails for any $\gamma>\alpha$ in light of (3.11).  Then if $M$ is compact or $K$ is a compact subset of $M$, there exists a constant $C>0$ such that for all $\gamma\leq\alpha$ \[\E[|R^\alpha_x-R^\alpha_y|^2]\leq C d(x,y)^{2\gamma}\] for all $x,y\in M$ (resp$.$ $x,y\in K$).  Then by Theorem A.4 in the appendix there is a version of $R^\alpha$ that is almost surely uniformly H\"{o}lder continuous over $M$ (resp$.$ $K$) of order $\gamma$ for any $\gamma<\alpha$.  Moreover from the discussion following Theorem A.4 there is a dense subset of $M$ (resp$.$ $K$) on which $R^\alpha$ fails to satisfy any H\"{o}lder condition of order $\gamma$ for any $\gamma>\alpha$ with probability 1.  By the remarks preceding the Theorem the same holds for $hR^\alpha$, when it exists.
\end{proof}
\begin{rmk}  From the proof above we see that \[\lim_{x\to y}\frac{\E[|R^\alpha_x-R^\alpha_y|^2]}{d(x,y)^{2\alpha}}=\lim_{x\to y}\int_0^{d(x,y)^{-2}}t^{\alpha-1}\Phi(d(x,y)^2t,x,y)(1-e^{-\frac{1}{4t}})\,dt\] and thus the exact comparison between the Riemannian metric of $M$ and the metric induced by $R^\alpha$ depends on the local geometry of $M$, in particular on the comparison with the Euclidean heat kernel contained in $\Phi(t,x,y).$

\end{rmk}

\begin{rmk}It would be desirable in the case of regular domains to extend continuity to the closure of $M$.  However the local Euclidean approximation of the heat kernel is not uniform near the boundary of $M$ and so some other method of proof seems necessary. On the other hand it is easy to show that for any sequence $(x^k_1,\dots,x^k_n)$ that approaches the boundary of $M$, $P(\|(hR^\alpha_{x^k_1},\dots,hR^\alpha_{x^k_n})\|>\epsilon)\stackrel{k}\to0$ for any $\epsilon>0$.  This combined with the existence of a continuous version as close as we like to the boundary seems sufficient for most applications, at least from the point of view of simulation.
\end{rmk}
\subsection{Distributional Scaling and Invariance}  
\subsubsection{Stationarity}  

\begin{defn}Let $(M,g)$ be a complete Riemannian manifold and $I(M)$ the group of isometries of $(M,g)$.  If $Y_x$ is a centered GRF over $(M,g)$ we say that $Y_x$ is \textit{stationary (or homogeneous)} if \[\E[Y_{\iota(x)}Y_{\iota(y)}]=\E[Y_xY_y]\] for any $\iota\in I(M)$ and all $x,y\in M$.  We say $Y_x$ has \textit{stationary (or homogeneous) increments} if \[\E[|Y_{\iota(x)}-Y_{\iota(y)}|^2]=\E[|Y_x-Y_y|^2]\] for any $\iota\in I(M)$ and all $x,y\in M$.
\end{defn}

Because for any manifold $(M,g)$ we have $H_t(\iota(x),\iota(y))=H_t(x,y)$ for any $\iota\in I(M)$ (see \cite{MR2569498}, Theorem 9.12) it is clear from the definitions,
\[ \E[R^\alpha_xR^\alpha_y]=\frac1{\Gamma\left(\frac d2+\alpha\right)}\int_0^\infty t^{\frac d2+\alpha-1}\left(H_t(x,y)-H_t(x,o)-H_t(y,o)+H_t(o,o)\right)\,dt\] and 
\[\E[hR^\alpha_xhR^\alpha_y]=\frac1{\Gamma\left(\frac d2+\alpha\right)}\int_0^\infty t^{\frac d2+\alpha-1}H_t(x,y)\,dt,\] that when they exist, $R^\alpha$ and $hR^\alpha$ have stationary increments and are stationary respectively.
\subsubsection{Self-Similarity}

Turning to self-similarity, let us first recall how this property is defined for random fields on Euclidean space: If $Y_x$ is a random field over $\mathbb R^d$, then $Y_x$ is \textit{self-similar} of order $\alpha>0$ if $c^{\alpha}Y_{\frac1cx}\stackrel{d}{=}Y_x$.  The Euclidean fractional Brownian field $fBf^\alpha$ is self similar of order $\alpha$, and we want to extend this property to manifolds.  To do this we must define an operation that extends the scaling operation on $\mathbb R^d$, $x\mapsto cx$.  This operation scales the distance between any two points by $c>0$: \[\|x-y\|\mapsto\|cx-cy\|=c\|x-y\|,\] or written another way, \[d(x,y)\mapsto cd(x,y).\]  Viewing $\mathbb R^d$ as a manifold, we see this is equivalent to scaling the Riemannian metric $(g_{ij})=(\delta_{ij})$ of $\mathbb R^d$ by $c^2$, \[\sum_{i,j=1}^dx_ix_jg_{ij}=\sum_{i=1}^dx_i^2=\|x\|^2\mapsto c^2\|x\|^2=\sum_{i,j=1}^dx_ix_jc^2g_{ij}.\]

Thus a natural definition of scaling for a manifold $M$ is to simply scale the metric as above.  Indeed, if $M$ is an embedded submanifold of $\mathbb R^d$ with induced metric $g_M$, then scaling the ambient space $\mathbb R^d$ results in the induced scaling on $M$ \[g_M\mapsto c^2g_M.\]  Of course, we'd like a definition of scaling that is intrinsic to the manifold in question, i.e., independent of any ambient Euclidean space, but that also agrees with the scaling induced by scaling any ambient space.  If we take the above operation as the definition of scaling for a general manifold $M$ we achieve this goal.  

We are thus ready to prove that the Riesz fields are self-similar.

\begin{prop}  Let $(M,g)$ be a complete Riemannian manifold or regular domain.  Both the Riesz field $R^\alpha$ and the stationary Riesz field $hR^\alpha$ over $(M,g)$ are self-similar of order $\alpha$ (if they exist on $M$) in the sense that if $\bar{R}^\alpha$ and $h\bar{R}^\alpha$ are the Riesz fields over $(M,c^2g)$  then \[c^\alpha R^\alpha_{x}\stackrel{d}{=}\bar{R}^\alpha_x\] and \[c^\alpha hR^\alpha_{x}\stackrel{d}{=}h\bar{R}^\alpha_x\] for any $c>0$.

\end{prop}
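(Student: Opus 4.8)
The plan is to track how the heat kernel transforms under the conformal rescaling $g\mapsto c^2g$ and then change variables in the defining integrals. First I would record the elementary scaling identities for the scaled manifold $(M,c^2g)$: the geodesic distance scales as $d_{c^2g}(x,y)=c\,d_g(x,y)$, the Riemannian volume scales as $dV_{c^2g}=c^d\,dV_g$, and the Laplace--Beltrami operator scales as $\Delta_{c^2g}=c^{-2}\Delta_g$. This last identity is immediate from the coordinate expression $\Delta=\frac1{\sqrt g}\sum\partial_j(g^{ij}\sqrt g\,\partial_i)$: replacing $g_{ij}$ by $c^2g_{ij}$ forces $g^{ij}\mapsto c^{-2}g^{ij}$ and $\sqrt g\mapsto c^d\sqrt g$, leaving a net factor $c^{-2}$. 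Consequently the heat semigroups are related by $e^{t\Delta_{c^2g}}=e^{(t/c^2)\Delta_g}$, and comparing the two integral representations $e^{t\Delta_{c^2g}}(f)(x)=\int_M H^{c^2g}_t(x,y)f(y)\,dV_{c^2g}(y)$ and $e^{(t/c^2)\Delta_g}(f)(x)=\int_M H^g_{t/c^2}(x,y)f(y)\,dV_g(y)$ yields the key identity
\[
H^{c^2g}_t(x,y)=c^{-d}\,H^g_{t/c^2}(x,y).
\]

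Next I would substitute this into the covariance of $\bar R^\alpha$ from Definition 3.1 (keeping the same origin point $o\in M$) and make the change of variables $s=t/c^2$. The powers of $c$ bookkeep as follows: a factor $c^{-d}$ from the heat kernel identity, a factor $c^{2(\frac d2+\alpha-1)}=c^{d+2\alpha-2}$ from $t^{\frac d2+\alpha-1}$, and a factor $c^2$ from $dt=c^2\,ds$, for a net factor $c^{2\alpha}$ (the constant $\Gamma(\frac d2+\alpha)$ is the same in both covariances and simply carries through). Hence
\[
\E[\bar R^\alpha_x\bar R^\alpha_y]=c^{2\alpha}\,\E[R^\alpha_xR^\alpha_y]=\E\left[(c^\alpha R^\alpha_x)(c^\alpha R^\alpha_y)\right],
\]
and since a centered GRF is determined in distribution by its covariance (Section 2.1), $c^\alpha R^\alpha_x\stackrel{d}{=}\bar R^\alpha_x$. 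The identical computation with $H^{c^2g}_t(x,y)$ in place of the bracketed difference, and with no origin point, gives $c^\alpha hR^\alpha_x\stackrel{d}{=}h\bar R^\alpha_x$. The same change of variables also shows that convergence of the integral over $(M,g)$ is equivalent to convergence over $(M,c^2g)$, so the qualifier ``if they exist on $M$'' is consistent with passing to the scaled manifold.

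I expect the only real content to be the heat-kernel scaling identity $H^{c^2g}_t=c^{-d}H^g_{t/c^2}$; once that is in hand the rest is a one-line change of variables and an appeal to uniqueness-in-distribution. The cleanest justification of the identity is the semigroup relation $e^{t\Delta_{c^2g}}=e^{(t/c^2)\Delta_g}$ together with uniqueness of the heat kernel as the Schwartz kernel of the heat semigroup with respect to the correct volume measure; alternatively one can verify directly that $(t,x,y)\mapsto c^{-d}H^g_{t/c^2}(x,y)$ solves $(\partial_t-\Delta_{c^2g,x})u=0$ with the correct initial condition tested against $dV_{c^2g}$, which is the characterization recalled in Section 2.2.
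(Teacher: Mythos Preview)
Your proposal is correct and follows essentially the same route as the paper: derive $\Delta_{c^2g}=c^{-2}\Delta_g$ from the coordinate expression, pass to the heat-kernel scaling $H^{c^2g}_t=c^{-d}H^g_{t/c^2}$ via the semigroup, and then change variables in the covariance integral to extract the factor $c^{2\alpha}$. Your bookkeeping of the powers of $c$ and the remark on equivalence of convergence are slightly more explicit than the paper's presentation, but there is no genuine methodological difference.
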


\begin{proof}  First we note from the coordinate expression for $\Delta$, if we denote by $\Delta_g$ the Laplacian of $(M,g)$ and $H^g_t(x,y)$ the corresponding heat kernel, we have $\Delta_{c^2g}=\frac 1{c^2}\Delta_g$.  But then because $L^2(M,dV_g)=L^2(M,dV_{c^2}g)$ we can write \begin{align}\notag\int_M c^dH^{c^2g}_t(x,y)f(y)\,dV_g(y)&=\int_M H^{c^2g}_t(x,y)f(y)\,dV_{c^2g}(y)\\\notag&=e^{-t\Delta_{c^2g}}(f)\\\notag&=e^{-\frac{t}{c^2}\Delta_{g}}(f)\\\notag&=\int_M H^{g}_{\frac{t}{c^2}}(x,y)f(y)\,dV_{g}(y)
\end{align} 
for any $f\in L^2(M,dV_g)$.  Thus by symmetry \[\frac1{c^d}H^g_{\frac t{c^2}}(x,y)=H^{c^2g}_t(x,y)\qquad\forall\, x,y\in M.\]  We then have \begin{align}c^{2\alpha}\notag\E[R^\alpha_{x}&R^\alpha_{y}]=\frac{c^{2\alpha}}{\Gamma\left(\frac d2+\alpha\right)}\int_0^\infty t^{\frac d2+\alpha-1}\left(H^{g}_t(x,y)-H^{g}_t(o,x)-H^{g}_t(o,y)+H^{g}_t(o,o)\right)\,dt\\\notag&=\frac1{\Gamma\left(\frac d2+\alpha\right)}\int_0^\infty t^{\frac d2+\alpha-1}\frac1{c^d}\left(H^{g}_{\frac{t}{c^2}}(x,y)-H^{g}_{\frac{t}{c^2}}(o,x)-H^{g}_{\frac{t}{c^2}}(o,y)+H^{g}_{\frac{t}{c^2}}(o,o)\right)\,dt\\\notag&=\frac1{\Gamma\left(\frac d2+\alpha\right)}\int_0^\infty t^{\frac d2+\alpha-1}\left(H^{c^2g}_t(x,y)-H^{c^2g}_t(o,x)-H^{c^2g}_t(o,y)+H^{c^2g}_t(o,o)\right)\,dt\\\notag&=\E[\bar{R}^\alpha_{x}\bar{R}^\alpha_{y}]\end{align} and similarly for $hR^\alpha$.

\end{proof}
\begin{rmk}  Here we see that $hR^\alpha$ exhibits essentially non-Euclidean phenomena; on $\R^d$ there cannot exist a GRF that is both stationary and self similar (see e.g$.$ \cite{MR1364671}).  We will return to the questions this raises in Section 5. \end{rmk}
\subsubsection{Uniqueness}
We now come to a natural question:  Are the Riesz fields the only fields with stationary increments that are also self-similar?  In other words, does requiring stationarity and self-similarity as above uniquely determine a GRF over a given manifold $M$?  To answer this we examine an example, $M=\mathbb S^1$, which we normalize to have total volume $2\pi$.  Using the expansion of section 3.2.1 we have \[R^\alpha(x)\stackrel d= \sum_{k\in\mathbb Z\backslash\{0\}}\frac1{\sqrt{2\pi}}|k|^{-\frac12-\alpha}(e^{ikx}-1)\xi_k.\]  In \cite{MR2198600} the author constructs a GRF, denoted $R_\alpha$, with the following covariance \[\frac12(d(x,0)^{2\alpha}+d(y,0)^{2\alpha}-d(x,y)^{2\alpha}).\]  In particular it is shown that \[R_\alpha(x)\stackrel{d}=\sum_{k\in\mathbb Z\backslash\{0\}} d_k(e^{ikx}-1)\xi_k\] where  \begin{align}\notag d_k&=\frac{\sqrt{-\int_0^{|k|\pi}u^{2\alpha}\cos(u)du}}{\sqrt{2\pi}|k|^{\frac12+\alpha}}.\end{align}Note however that for $\alpha=\frac12$, \[d_k=\begin{cases}0\qquad &k\,\mbox{even},\\(\sqrt{\pi}|k|)^{-1}\qquad &k\,\mbox{odd}\end{cases}.\]  Thus \[R_\frac12(x)=\sum_{k\in\mathbb Z\backslash\{0\}}\frac1{\sqrt\pi}|2k+1|^{-1}(e^{i(2k+1)x}-1)\xi_k\] and \[\sqrt{2}R^\frac12(x)=\sum_{k\in\mathbb Z\backslash\{0\}}\frac1{\sqrt{\pi}}|k|^{-1}(e^{ikx}-1)\xi_k.\]  We then find that \[\mathbb E[|\sqrt{2}R^\frac12(x)|^2]-\mathbb E[|R_\frac12(x)|^2]=\sum_{k=-\infty}^{\infty}\frac1{{\pi}}|2k|^{-2}|e^{i2kx}-1|^2,\] which is not identically zero.  As their variances are not identical, these two fields are not equal in distribution.  However it is easy to see that both fields have stationary increments and are self-similar of order $1/2$.  

Thus even in the simple case of $\mathbb S^1$ we do not have uniqueness, and so in general the Riesz fields are not the only GRF's that are self-similar with stationary increments over a given manifold $M$.  It then remains an open question to determine the general form of the covariance of a GRF with stationary increments that is also self-similar over a given manifold other than $\R^d$.

\section{The Bessel Field}  We now turn to constructing stationary counterparts to $R^\alpha$ by analogy with the Brownian motion and Ornstein-Uhlenbeck processes on $\R$.  We define the \textit{Bessel Field} of order $\alpha\in (0,1)$ by \be B^\alpha_x\stackrel{d}{=}\frac1{\Gamma\left(\frac d4+\frac\alpha2\right)}\int_M\int_0^\infty t^{\frac d4+\frac\alpha2-1}e^{-t}H_t(x,z)\,dt\,dW(z),\ee which extends the Ornstien-Uhlenbeck fields with covariance given (up to a constant) by \[\int_{\R^d}\frac{e^{i\<x,y\>}}{(1+|\xi|^2)^{\frac d2+\alpha}}\,d\xi.\]  These fields are altogether more well behaved than the Riesz fields, which is not surprising in light of the analogy with the Riesz and Bessel potentials.

\begin{thm}  The Bessel field exists over any complete Riemannian manifold or regular domain $M$ for all $\alpha\in(0,1)$.\end{thm}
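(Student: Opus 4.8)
The plan is to verify that for each $x\in M$ the integral kernel
\[
h_B(x,z)=\frac1{\Gamma\left(\frac d4+\frac\alpha2\right)}\int_0^\infty t^{\frac d4+\frac\alpha2-1}e^{-t}H_t(x,z)\,dt
\]
appearing in (4.1) lies in $L^2(M,dV_g)$; by the discussion of white noise integrals in Section 2.1 this is precisely what is required for (4.1) to define a centered GRF over $M$, i.e., for the Bessel field to exist.

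First I would compute $\|h_B(x,\cdot)\|_{L^2}^2$ by imitating the computation carried out for $hR^\alpha$ in Section 3.1. Since $H_t>0$ and $t^{\frac d4+\frac\alpha2-1}e^{-t}\ge 0$, Tonelli's theorem permits the interchange of the integration over $M$ with the two integrations over $(0,\infty)$ with no integrability hypothesis, and the semigroup property $\int_M H_t(x,z)H_s(z,x)\,dV_g(z)=H_{t+s}(x,x)$ collapses the spatial integral. The only new feature relative to the $hR^\alpha$ computation is the factor $e^{-t}e^{-s}=e^{-(t+s)}$: after the substitution $u=t+s$ this exponential pulls out of the remaining $s$-integral over $(0,u)$, which is then the same Beta-type integral evaluated in Section 3.1 via the Laplace transform, and after the $\Gamma\left(\frac d4+\frac\alpha2\right)$ factors cancel one is left with $\frac1{\Gamma\left(\frac d2+\alpha\right)}u^{\frac d2+\alpha-1}$. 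Hence
\[
\|h_B(x,\cdot)\|_{L^2}^2=\frac1{\Gamma\left(\frac d2+\alpha\right)}\int_0^\infty u^{\frac d2+\alpha-1}e^{-u}H_u(x,x)\,du,
\]
so existence reduces to the finiteness of this single integral. (Running the same manipulation with $x\neq y$ gives $\E[B^\alpha_xB^\alpha_y]=\frac1{\Gamma\left(\frac d2+\alpha\right)}\int_0^\infty u^{\frac d2+\alpha-1}e^{-u}H_u(x,y)\,du$, the Bessel-potential analogue of the Riesz covariance, and records the covariance of the resulting field.)

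Next I would split this integral at $u=1$. On $(0,1]$ the local Euclidean comparison (2.1) gives $H_u(x,x)=O(u^{-d/2})$ as $u\to0$, so the integrand is $O(u^{\alpha-1})$, which is integrable on $(0,1]$ because $\alpha>0$; this is exactly the small-time estimate already used in Section 3.1, and it holds on any complete manifold and on any regular domain. On $[1,\infty)$ I would use that $u\mapsto H_u(x,x)=\|H_{u/2}(x,\cdot)\|_{L^2}^2$ is non-increasing: writing $H_{u/2}(x,\cdot)=e^{\frac{u-1}{2}\Delta}H_{1/2}(x,\cdot)$ and using that $e^{s\Delta}$ is an $L^2$-contraction (the spectrum of $\Delta$ lies in $(-\infty,0]$), one gets $H_u(x,x)\le H_1(x,x)<\infty$ for all $u\ge1$, the last value being finite since $H_t\in C^\infty(M\times M\times(0,\infty))$. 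Therefore
\[
\int_1^\infty u^{\frac d2+\alpha-1}e^{-u}H_u(x,x)\,du\le H_1(x,x)\int_1^\infty u^{\frac d2+\alpha-1}e^{-u}\,du<\infty,
\]
and combining the two pieces shows $h_B(x,\cdot)\in L^2(M,dV_g)$ for every $x$, as desired.

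The point to stress is that there is essentially no obstacle here, in sharp contrast to the Riesz fields: the large-time growth of $H_u(x,x)$---which was the whole difficulty in Section 3.2 and can genuinely fail to be integrable against a power of $u$---is now irrelevant, because the factor $e^{-u}$ forces absolute convergence as soon as $H_u(x,x)$ is, say, polynomially bounded, and the crude monotone bound $H_u(x,x)\le H_1(x,x)$ already suffices. The argument is uniform over complete manifolds and regular domains, using only (2.1) near $t=0$ and $\Delta\le0$; for a regular domain one could instead invoke the Dirichlet on-diagonal bound $H_u(x,x)=O(e^{-\lambda_0 u})$, but this refinement is not needed.
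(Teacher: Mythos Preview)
Your proof is correct and follows essentially the same route as the paper: compute the $L^2$ inner product via Tonelli, the semigroup property, and the Beta integral to reduce to $\frac1{\Gamma(\frac d2+\alpha)}\int_0^\infty u^{\frac d2+\alpha-1}e^{-u}H_u(x,y)\,du$, then observe this is finite because $H_u$ is bounded for large $u$ while (2.1) handles $u\to0$. The paper simply invokes $\varlimsup_{t\to\infty}H_t(x,y)<\infty$ for the large-time piece, whereas you supply the slightly more explicit monotonicity argument $H_u(x,x)\le H_1(x,x)$ via the $L^2$-contractivity of $e^{t\Delta}$; this is just a fleshing-out of the same step, not a different approach.
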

\begin{proof}Proceeding as for $hR^\alpha$, for each $x,y\in M$ \begin{align}\notag\E[B^\alpha_xB^\alpha_y]&=\left(\frac1{\Gamma\left(\frac d4+\frac\alpha2\right)}\right)^2\int_M\int_0^\infty t^{\frac d4+\frac\alpha2-1}e^{-t}H_t(x,z)\,dt\int_0^\infty s^{\frac d4+\frac\alpha2-1}e^{-s}H_s(y,z)\,ds\,dV_g(z)\\\notag&=\left(\frac1{\Gamma\left(\frac d4+\frac\alpha2\right)}\right)^2\int_0^\infty \int_0^\infty t^{\frac d4+\frac\alpha2-1} s^{\frac d4+\frac\alpha2-1}e^{-(t+s)}H_{t+s}(x,y)\,dt\,ds\\&=\frac1{\Gamma\left(\frac d2+\alpha\right)}\int_0^\infty t^{\frac d2+\alpha-1}e^{-t}H_t(x,y)\,dt
 \end{align}
From the fact that the heat kernel always satisfies $\varlimsup_{t\to\infty}H_t(x,y)<\infty$ for any $x$ and $y$, we see that (4.2) converges everywhere on $M\times M$.

\end{proof}

Clearly $B^\alpha_x$ is stationary and we can see that it does not possess the scaling properties of the Riesz fields.  Turning to sample path regularity we have the following result.

\begin{thm}The Bessel field $B^\alpha$ has a version with sample paths almost surely uniformly H\"{o}lder continuous of order $\gamma$ for any $\gamma<\alpha$ and almost surely failing to satisfy a H\"{o}lder condition of order $\gamma$ for any $\gamma>\alpha$ on a dense subset of $M$.
\end{thm}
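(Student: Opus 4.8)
The plan is to follow the template of the proof of Theorem 3.6, exploiting the extra factor $e^{-t}$ in the covariance (4.2). As there, the canonical metric of $B^\alpha$ is
\[\E[|B^\alpha_x - B^\alpha_y|^2] = \frac{1}{\Gamma\left(\frac d2+\alpha\right)}\int_0^\infty t^{\frac d2+\alpha-1}e^{-t}\left(H_t(x,x) - 2H_t(x,y) + H_t(y,y)\right)\,dt,\]
and I would split it as $I_1 = \int_0^1(\cdots)\,dt$ and $I_2 = \int_1^\infty(\cdots)\,dt$. First I would reduce to a compact set $K\subset M$ (taking $K = M$ if $M$ is already compact), and, as in Theorem 3.6, cover $K$ by finitely many closed geodesic discs $D_p$ on which (2.1) holds with $\Phi \in C^k(\overline{D_p}\times\overline{D_p}\times[0,T])$, $k>2$, and on which normal coordinates are available.

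For $I_1$: on $[0,1]$ the weight $e^{-t}$ lies between $e^{-1}$ and $1$, so it does not affect the order of magnitude. Substituting (2.1) and arguing verbatim as in the derivation of (3.11) — writing $\rho = d(x,y)$, using Lemma A.1 to dispatch $\Phi(t,x,x)+\Phi(t,y,y)-2\Phi(t,x,y) = O(\rho^2)$ uniformly for $t\in[0,1]$, and rescaling $t = \rho^2 s$ in the surviving term $2\int_0^1 t^{\alpha-1}e^{-t}\Phi(t,x,y)(1-e^{-\rho^2/4t})\,dt$ — I expect both $I_1 = O(\rho^{2\alpha})$ and a matching lower bound $I_1 \geq c\,\rho^{2\alpha}$ for $\rho$ small, the latter because the rescaled integral tends to the positive constant $\int_0^\infty s^{\alpha-1}(1-e^{-1/4s})\,ds$ as $\rho\to 0$.

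For $I_2$: this is where the weight $e^{-t}$ does the essential work, and it is precisely here that no curvature or volume hypothesis is needed (in contrast to Theorems 3.4 and 3.6). Working in normal coordinates on a disc $D_p$ and applying the interior Schauder estimate exactly as in the second case of the proof of Theorem 3.6 — with $P = \partial_t - L$ acting on the translates $u(\cdot,\cdot,t+T)$ — I obtain $\sup_{\overline{D_r}\times\overline{D_r}}|\partial^2 H_t/\partial x_i\partial x_j| \leq K_r \sup_{s\in(t-1/2,\,t+1/2),\ \overline{D_r}\times\overline{D_r}}H_s$ with $K_r$ independent of $t$. The new point is that $H_s(x,x) = \|H_{s/2}(x,\cdot)\|_{L^2}^2$ is non-increasing in $s$, since $e^{\tau\Delta}$ is an $L^2$-contraction; hence $\sup_{s\geq 1/2,\,\overline{D_r}}H_s(x,x) = \sup_{\overline{D_r}}H_{1/2}(x,x) < \infty$, and together with $H_s(x,y)\leq\sqrt{H_s(x,x)H_s(y,y)}$ this bounds the right-hand side uniformly for $t\geq 1$. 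Then Lemma A.1 gives $H_t(x,x) - 2H_t(x,y) + H_t(y,y) \leq C\,d(x,y)^2$ for all $t\geq 1$ and $x,y\in D_r$, whence $I_2 \leq C\,d(x,y)^2\int_1^\infty t^{\frac d2+\alpha-1}e^{-t}\,dt = O(d(x,y)^2)$.

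Combining the two parts, on each disc $\E[|B^\alpha_x - B^\alpha_y|^2] = O(d(x,y)^{2\alpha})$ (the $I_2 = O(\rho^2)$ contribution being negligible since $\alpha<1$) while $\E[|B^\alpha_x - B^\alpha_y|^2] \geq c_p\,d(x,y)^{2\alpha}$ for $d(x,y)$ small; patching over the finite cover of $K$ yields $\E[|B^\alpha_x - B^\alpha_y|^2] \leq C\,d(x,y)^{2\gamma}$ on $K$ for every $\gamma\leq\alpha$, and the failure of any such bound for $\gamma>\alpha$ on a dense subset. Theorem A.4 and the discussion following it then deliver the asserted version and its sample-path behaviour, just as in the proof of Theorem 3.6. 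I expect the only real obstacle to be the tail estimate $I_2$ — specifically, arranging that both the Schauder constant $K_r$ and the heat-kernel supremum are uniform in $t$; the monotonicity of $H_t(x,x)$ in $t$ combined with the exponential weight $e^{-t}$ is exactly what lets this go through over an arbitrary complete manifold.
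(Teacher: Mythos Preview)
Your proposal is correct and follows the same approach as the paper, which simply says to split $\E[|B^\alpha_x-B^\alpha_y|^2]$ into $I_1=\int_0^1(\cdots)\,dt$ and $I_2=\int_1^\infty(\cdots)\,dt$ and ``argue as in Theorem 3.7'' (an evident typo for Theorem~3.6). Your treatment of $I_1$ is exactly that of (3.10)--(3.11), and your treatment of $I_2$ via the Schauder estimate together with the monotonicity of $t\mapsto H_t(x,x)$ is precisely the detail that the paper leaves implicit in order to make the argument of the second non-compact case of Theorem~3.6 go through on an \emph{arbitrary} complete manifold---the exponential weight $e^{-t}$ replacing the large-time heat-kernel decay $t^{-(d/2+\beta)}$ that was assumed there.
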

\begin{proof}Split the integral 
\begin{align}\E[|B^\alpha_x-B^\alpha_y|^2]\notag&=  \frac1{\Gamma\left(\frac d2+\alpha\right)}\int_0^\infty t^{\frac d2+\alpha2-1}e^{-t}\left(H_t(x,x)-2H_t(x,y)+H_t(y,y)\right)\,dt\\\notag&=\frac1{\Gamma\left(\frac d2+\alpha\right)}(I_1+I_2)\end{align} where
\[I_1=\int_0^1 t^{\frac d2+\alpha2-1}e^{-t}\left(H_t(x,x)-2H_t(x,y)+H_t(y,y)\right)\,dt\] and \[I_2=\int_1^\infty t^{\frac d2+\alpha2-1}e^{-t}\left(H_t(x,x)-2H_t(x,y)+H_t(y,y)\right)\,dt\]

and argue as in Theorem 3.7.

\end{proof}
\section{Conclusion and Further Work}\subsection{Existence and Uniqueness}  Using a spectral theoretic approach we have constructed analogues of the fractional Brownian fields over arbitrary compact manifolds and a wide class of non-compact manifolds.  There are still many questions remaining. For example in light of the non-uniqueness result in Section 3.4.3, one could ask how many different such fields there are over any given manifold.  One could also attempt to determine the general form the covariance of such objects must take.

We also saw in example 3.3 that $R^\alpha$ does not exist on $\mathbb S^1\times\R$ (with the product metric) for $\alpha>1/2$.  This raises the following question:  Does there exist any Gaussian field over $\mathbb S^1\times\R$ with stationary increments that is also self similar of order $\alpha$ for some $\alpha\in (1/2,1)$?  More generally, are there geometric conditions that ensure a given manifold can have such a field defined over it?  

We conjecture that it is possible to construct such fields over any manifold $M$ in the following way:  Somewhat informally, the Riesz fields are solutions to the stochastic equation \[(-\Delta)^{\frac d4+\frac\alpha2}X=W,\] where $W$ is Gaussian white noise over $M$ and $\Delta$ is the Laplacian of $M$ with certain ``boundary conditions," i.e., with domain restricted to include only functions $f$ such that $f(o)=0$ for some fixed point $o\in M$.  As we saw, for example in the case of compact manifolds, this restriction of the domain led to the existence of a continuous integral kernel for the corresponding inverse and it seems plausible that in general we could always obtain such a kernel through restricting the domain of $\Delta$ by determining a sufficient number of derivatives of $f\in Dom(\Delta)$ at the point $o$.  Of course finding an explicit expression for such a kernel may be very difficult in general.
\subsection{Restriction to Submanifolds}
There is one aspect of this theory we did not touch upon, that being the behavior of our fields when restricted to geodesics and more general submanifolds.  One thing we can say is that for a given manifold $M$, following the discussion of self-similarity and dilation in Section 3.7, the Riesz fields over $M$ when restricted to an embedded submanifold $N$ determine self-similar fields over $N$.  Also, being embedded, the isometry group of $N$ determines a (possibly trivial) subgroup of the general isometry group of the $M$.  However, the resulting restricted field may be stationary or have stationary increments (for example, consider the $fBf^\alpha$ over $\R^d$ restricted to $\mathbb S^{d-1}$).  Moreover, as we already saw, stationarity and self-similarity alone do not uniquely determine a GRF in general, and so we cannot say that $R^\alpha$ over $M$ when restricted to a submanifold $N$ agrees with $R^\alpha$ over $N$.

While we have avoided symmetry hypothesis in our treatment, when dealing with invariance properties involving isometry groups one is naturally led towards general harmonic analysis and it would be interesting to study GRF's over manifolds from this point of view.  For example, one could consider GRF's that are only stationary with respect to a subgroup of the entire isometry group, analogous to GRF's over $\R^d$ that are only rotationally invariant (so called \textit{isotropic} random fields).

One property of the Euclidean fractional Brownian fields (or more generally any GRF that is self-similar with translation invariant increments) is that when restricted to lines through the origin they agree with the usual fractional Brownian motion, up to a constant.  One could then ask if this holds more generally.  For example one could require that a field over $M$ when restricted to infinite geodesics became a fractional Brownian motion.  This would require a subgroup of the isometry group of $M$ that restricted to translation of the given geodesic.  Of course, in general geodesics may be closed or infinite.  Again, one could study such questions from a general harmonic analytic point of view.

\subsection{Hyperbolic GRF's}
We also mentioned above that the existence of $hR^\alpha$ raises interesting questions regarding negatively curved manifolds and what we could loosely call hyperbolic Gaussian random fields.  For example, although the proof of existence of $hR^\alpha$ over $\mathbb H^d$ uses properties of the heat kernel, one can ask if there are more geometric or topological conditions one can put on a manifold $M$ to ensure the existence of some self-similar and stationary GRF.  Conversely one can ask what are the implications of such a field existing over $M$.  Is $hR^\alpha$ the only such field or are there others?

The above is only a first attempt to state some questions at the intersection of geometry and probability that, at least on the face of it, seem novel and interesting; doubtless there are others.  The study of random fields over manifolds, although its history is not short, seems to the author to still be wide open.  It is our hope that the work here and the questions raised above will be of interest to both researchers in geometry or geometric analysis and probabilists and lead to further interaction between the two.

\section{Acknowledgements}
The author wishes to express deep gratitude to his advisor, Harold Parks, for his confidence,  generosity, and encouragement.  The author also thanks his teachers David Finch, Mina Ossiander, and Ralph Showalter for their time and encouragement as well as Robert Higdon and Kirk Lancaster for helpful comments and suggestions.  Lastly the author thanks the anonymous referee for their valuable remarks. 

\appendix \section{}
First we record the following Lemma involving Taylor approximation.
\begin{lem}Let $M$ be complete and suppose $f\in C^\infty(M\times M)$ is symmetric.  Around any point $p \in M$ there exists a closed geodesic disk  $D_p$ centered at $p$ and a constant $C_p>0$ such that \[|f(x,x)-2f(x,y)+f(y,y)|\leq C_pd(x,y)^2\sup_{D_p\times D_p}\left|\sum_{i,j=1}^d\frac{\partial^2 f}{\partial x_i\partial x_j}\right|\] for all $x,y\in D_p$.

\end{lem}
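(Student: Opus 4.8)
The plan is to reduce the statement to an elementary Euclidean computation in geodesic normal coordinates, the point being that the expression $f(x,x)-2f(x,y)+f(y,y)$ is a \emph{mixed second difference} and so is controlled by second derivatives of $f$ together with $d(x,y)^2$. First I would fix $p$ and pick $r>0$ small enough that the geodesic ball $D_p(r)$ lies inside a normal coordinate neighbourhood of $p$; writing $\exp_p^{-1}$ for the corresponding chart and identifying $T_pM$ with $\R^d$, the image of $D_p(r)$ is exactly the Euclidean ball $B=\{v\in\R^d:|v|<r\}$, since $d(p,\exp_p(v))=|v|$ in these coordinates. In particular $\exp_p^{-1}(D_p(r))$ is convex, which is precisely what will allow me to join coordinate images by straight segments. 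Put $g(v,w)=f(\exp_p(v),\exp_p(w))$, a $C^\infty$ symmetric function on $B\times B$. Shrinking $r$ once more if necessary, I would also record the standard comparison between the Riemannian and the coordinate distance: since the metric coefficients satisfy $g_{ij}(0)=\delta_{ij}$ and are continuous, on a (possibly smaller) disk, which I then rename $D_p$, there is a constant $c_p>0$ with $|\exp_p^{-1}(x)-\exp_p^{-1}(y)|\le c_p\,d(x,y)$ for all $x,y\in D_p$.

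The heart of the argument is the mixed-second-difference identity. For $x,y\in D_p$ with coordinate images $v,w$, set $\gamma_u=(1-u)w+uv$ (which lies in $B$ by convexity) and $\Phi(s,t)=g(\gamma_s,\gamma_t)$ for $(s,t)\in[0,1]^2$. Since $g$ is symmetric, $\Phi(s,t)=\Phi(t,s)$; in particular $g(v,w)=g(w,v)$, so
\[ f(x,x)-2f(x,y)+f(y,y)=g(v,v)-2g(v,w)+g(w,w)=\Phi(1,1)-\Phi(1,0)-\Phi(0,1)+\Phi(0,0). \]
Applying the fundamental theorem of calculus twice — first integrating $\partial_t\Phi$ in $t$ at the endpoints $s=0,1$, then integrating the resulting difference in $s$ — gives
\[ \Phi(1,1)-\Phi(1,0)-\Phi(0,1)+\Phi(0,0)=\int_0^1\!\!\int_0^1\partial_s\partial_t\Phi(s,t)\,ds\,dt, \]
and, since $\dot\gamma_u=v-w$, the chain rule yields
\[ \partial_s\partial_t\Phi(s,t)=\sum_{i,j=1}^d\frac{\partial^2 g}{\partial v_i\,\partial w_j}(\gamma_s,\gamma_t)\,(v-w)_i(v-w)_j. \]

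To conclude I would take absolute values and bound the integrand by its supremum over $B\times B$, obtaining
\[ |f(x,x)-2f(x,y)+f(y,y)|\le |v-w|^2\,\sup_{B\times B}\Bigl|\sum_{i,j=1}^d\frac{\partial^2 g}{\partial v_i\,\partial w_j}\Bigr| \]
up to a harmless dimensional constant relating the quadratic form $(v-w)\mapsto(v-w)^{T}(\nabla_v\nabla_w g)(v-w)$ to the array of its entries; translating back through the chart via $|v-w|\le c_p\,d(x,y)$, and reading the $\partial^2 g/\partial v_i\partial w_j$ as the second-order coordinate derivatives $\partial^2 f/\partial x_i\partial x_j$ of $f$ on $D_p\times D_p$ in the notation of the statement, one gets the claimed inequality with $C_p$ a constant multiple of $c_p^{2}$.

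The only genuinely delicate point is the coordinate reduction, and it has two halves: (i) arranging that $D_p$ is geodesically convex in the chart, so that the straight segments $\gamma_u$ stay inside the domain of $g$ — this is automatic for a normal ball and is exactly why normal coordinates are the natural choice; and (ii) the comparison $|v-w|\lesssim d(x,y)$, which is the standard equivalence of the Riemannian and coordinate distance functions on a small normal ball (it follows from uniform positive-definiteness of $(g_{ij})$ there, treating separately curves that leave the ball via a length-versus-radius estimate). Everything else is the one-line mixed-difference identity together with the chain rule; the symmetry hypothesis on $f$ is used at precisely one place, namely to replace $-f(x,y)-f(y,x)$ by $-2f(x,y)$ so that the left-hand side genuinely equals the mixed second difference $\Phi(1,1)-\Phi(1,0)-\Phi(0,1)+\Phi(0,0)$.
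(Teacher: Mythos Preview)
Your mixed--second--difference argument is clean and correct up to the last step, but the derivatives it produces are not the ones the lemma asks for. When you compute $\partial_s\partial_t\Phi(s,t)$ you get the \emph{mixed} partials $\partial^2 g/\partial v_i\,\partial w_j$, i.e.\ one derivative in the first copy of $M$ and one in the second. The quantity $\sum_{i,j}\partial^2 f/\partial x_i\partial x_j$ in the lemma, however, denotes both derivatives in the \emph{first} factor (this is how the paper uses the lemma in the proof of Theorem~3.6, where a parabolic Schauder estimate controls precisely $\partial^2 H_t/\partial x_i\partial x_j$ with both indices in the first spatial slot). Your sentence ``reading the $\partial^2 g/\partial v_i\partial w_j$ as the second-order coordinate derivatives $\partial^2 f/\partial x_i\partial x_j$'' is therefore not a notational translation but an unjustified identification of two genuinely different arrays of second derivatives; symmetry of $f$ relates $\partial_{v_i}\partial_{w_j}g(v,w)$ to $\partial_{w_i}\partial_{v_j}g(w,v)$, not to $\partial_{v_i}\partial_{v_j}g$.

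The paper obtains first--slot derivatives by a different device: it runs two \emph{one--variable} Taylor expansions, of $z\mapsto f(z,x)$ about $z=x$ and of $z\mapsto f(z,y)$ about $z=y$, and then uses the symmetry $f(y,x)=f(x,y)$ to combine them. The remainders in those expansions involve only $\partial^2 f/\partial x_i\partial x_j$ evaluated along segments in the first variable, which is exactly what the statement records. Your double--integral identity is arguably more elegant and gives an equally good $O(d(x,y)^2)$ bound, but with the mixed Hessian on the right; if you want the lemma as written you should switch to the two one--variable expansions, or else prove a companion estimate bounding the mixed partials by the quantity you need in the downstream application.
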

\begin{proof}Let $F\in C^2(\R^d)$ and recall Taylor's Theorem: for each $p\in \R^d$ and all $x\in \R^d$\begin{align}\notag F(x)=F(p)&+\sum_{i=1}^d\frac {\partial F}{\partial x_i}(p)(x_i-p_i)\\\notag&+\sum_{i,j=1}^d(x_i-p_i)(x_j-p_j)\frac{2}{1+\delta_{ij}}\int_0^1(1-t)\frac{\partial^2 F}{\partial x_i\partial x_j} (p+t(x-p))dt.\end{align}  

Now let $f\in C^2(\R^d\times \R^d)$ and $f(x,y)=f(y,x)$.  Fix $x,y\in \R^d$.  Then letting $p=(x,y)$, from the symmetry of $f$ we have \begin{align}\notag f(x,x)-&2f(x,y)+f(y,y)\\\notag&\qquad
=\sum_{i,j=1}^d(x_i-y_i)(x_j-y_j)\int_0^1(1-t)\frac{\partial^2 f}{\partial x_i\partial x_j} (x+t(y-x),x)dt\\\notag&\qquad\quad+\sum_{i,j=1}^{d}(x_i-y_i)(x_j-y_j)\int_0^1(1-t)\frac{\partial^2 f}{\partial x_{i}\partial x_{j}} (y+t(x-y),y)dt\\\notag&\qquad
=\int_0^1\sum_{i,j=1}^d(x_i-y_i)(x_j-y_j)(1-t)\left(\frac{\partial^2 f}{\partial x_i\partial x_j} (x+t(y-x),x)\right.\\\notag&\qquad\quad+\left.\frac{\partial^2 f}{\partial x_{i}\partial x_{j}} (y+t(x-y),y)\right)dt
\\\notag&\qquad=c\sum_{i,j=1}^d(x_i-y_i)(x_j-y_j)\left(\frac{\partial^2 f}{\partial x_ix_j}(x+\theta_1,x)+\frac{\partial^2 f}{\partial x_ix_j}(y+\theta_2,y)\right)\end{align} for some constant $c>0$ and $\theta_k\in\R^{d}$ with $\|\theta_k\|_{\R^{d}}<\|x-y\|_{\R^d}$. 
In particular for $x,y$ in a closed disk $D_\epsilon$ of radius $\epsilon>0$ we have \[|f(x,x)-2f(x,y)+f(y,y)|\leq C_1\|x-y\|_{\R^d}^2\sup_{D_\epsilon\times D_\epsilon}\left|\sum_{i,j=1}^d\frac{\partial^2 f}{\partial x_i\partial x_j}\right|\] for some $C_1>0$.   

Now suppose $f\in C^\infty(M\times M)$ is symmetric and let $D_p$ be a geodesic disk centered at $p\in M$. Then the above implies \be|f(x,x)-2f(x,y)+f(y,y)|\leq C_2d(x,y)^2\sup_{\overline D_p\times \overline D_p}\left|\sum_{i,j=1}^d\frac{\partial^2 f}{\partial x_i\partial x_j}\right|\ee for all $x,y\in D_p$. 

\end{proof}
\subsection{Continuity of Gaussian random fields}

Here we provide analogues of results given for Gaussian fields over $\R^d$ in the setting of manifolds.  These proofs are simple modifications of the originals and we include them for convenience.  The first result is an analytical lemma, given for hypercubes in $\R^d$.  We will replace the cubes with metric disks and $\R^d$ by a $d$-dimensional manifold $M$.  Let $p$ be even and continuous on $[-1,1]$, $p(|x|)$ monotone increasing, and satisfy $\lim_{x\to0}p(x)=0$.

\begin{lem} (Manifold version of Lemma 1 in \cite{MR0410880}):  Let $f\in C(I_0)$ where $I_0\subset M$ is compact, has non-empty interior, and has no isolated points.  Suppose that \[\int_D\int_Dexp\left(\frac{f(x)-f(y)}{p(\mbox{diam}(D))}\right)^2\,dx\,dy\leq B\] for all closed metric disks $D\subset I_0$.  Then for some $C>0$ \[|f(x)-f(y)|\leq 8\int_0^{d(x,y)}\sqrt{\log(BCu^{-2d})}\,dp(u)\] for all $x,y\in I_0$.

\end{lem}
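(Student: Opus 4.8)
The plan is to transcribe the chaining argument behind Lemma 1 of \cite{MR0410880} --- the multidimensional Garsia--Rodemich--Rumsey inequality --- to the Riemannian setting, replacing the nested dyadic subcubes of $\R^d$ by nested sequences of metric disks and supplying a single geometric input: a uniform lower bound on the volume of small disks. Since $I_0$ is compact there are $r_0>0$ and $c_0>0$, depending only on bounds for the injectivity radius and the curvature on a neighborhood of $I_0$, with $\mathrm{Vol}(D_x(r))\geq c_0 r^d$ for all $x\in I_0$ and $0<r\leq r_0$; by monotonicity of volume this persists, with a smaller constant absorbing $\mathrm{diam}(I_0)$, for all $0<r\leq\mathrm{diam}(I_0)$. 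This is the only place the geometry enters, and it is exactly what promotes the exponent $u^{-2}$ of the one--dimensional lemma to the $u^{-2d}$ appearing here, since in the chaining the denominators that arise are products $\mathrm{Vol}(D_n)\,\mathrm{Vol}(D_{n+1})\asymp r_n^{2d}$ of disk volumes in place of products of interval lengths.

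For the chaining I would run the Euclidean argument essentially verbatim. Fix $x,y\in I_0$ (assuming the right-hand side is finite, otherwise there is nothing to prove), set $\rho=d(x,y)$, write $\delta_D=\mathrm{diam}(D)$, and for a closed disk $D\subset I_0$ put
\[
J_D(z)=\int_D\exp\!\big((f(z)-f(w))^2/p(\delta_D)^2\big)\,dw ,\qquad\text{so that}\qquad \int_D J_D(z)\,dz\leq B .
\]
Let $D_0$ be a disk of radius comparable to $\rho$ about a midpoint of a minimizing geodesic from $x$ to $y$, so $x,y\in D_0$, and pick an anchor $z_0\in D_0$ with $J_{D_0}(z_0)\lesssim B/\rho^d$ (possible by Chebyshev). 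Form a chain $D_0\supset D_1\supset D_2\supset\cdots$ of disks centered at $x$ with radii $r_n\downarrow0$ chosen, following the rule in \cite{MR0410880}, so that the values $p(\delta_{D_n})$ decay geometrically, and select $z_n\in D_n$ inductively by a double pigeonhole: the set of $z\in D_n$ with $J_{D_n}(z)\leq NB/\mathrm{Vol}(D_n)$ and the set of $z\in D_n$ with $\exp((f(z_{n-1})-f(z))^2/p(\delta_{D_{n-1}})^2)\leq(N/\mathrm{Vol}(D_n))\,J_{D_{n-1}}(z_{n-1})$ each fill all but a small fraction of $D_n$ once $N$ is fixed large, hence meet. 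Combining the two resulting bounds with $\mathrm{Vol}(D_n)\geq c_0 r_n^d$ and the comparability of consecutive radii gives, for a fixed constant $C$,
\[
|f(z_{n-1})-f(z_n)|\ \leq\ p(\delta_{D_{n-1}})\,\sqrt{\log\!\big(B\,C\,r_n^{-2d}\big)} .
\]
Since $f$ is continuous and $z_n\in D_x(r_n)$ with $r_n\to0$ we have $z_n\to x$, so $|f(x)-f(z_0)|\leq\sum_{n\geq1}|f(z_{n-1})-f(z_n)|$; the geometric decay of the $p(\delta_{D_n})$ lets one dominate this sum by a constant multiple of $\int_0^{\delta_{D_0}}\sqrt{\log(BCu^{-2d})}\,dp(u)$ via the standard monotonicity estimate. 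Carrying out the symmetric construction of a chain centered at $y$, adding $|f(x)-f(z_0)|+|f(z_0)-f(y)|\geq|f(x)-f(y)|$, and rescaling the upper limit from $\delta_{D_0}\asymp\rho$ to $\rho$ (which changes only the constant inside the logarithm, already absorbed into $C$) yields $|f(x)-f(y)|\leq 8\int_0^{d(x,y)}\sqrt{\log(BCu^{-2d})}\,dp(u)$. The pigeonhole selection, the choice of scales, the telescoping, and the sum-to-integral comparison are all line-for-line as in \cite{MR0410880}.

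The main difficulty is not conceptual but a boundary technicality: $D_0$ has radius $\asymp\rho$ and need not be contained in $I_0$ when $x$ and $y$ lie near $\partial I_0$, so the hypothesis does not literally apply to it (nor to the larger disks $D_n$). I would address this either by first establishing the inequality for all $x,y$ with $\rho$ below a threshold $\rho_1$ chosen small enough that every disk in both chains stays inside $I_0$ --- which already suffices for the use of this lemma in Theorem A.4, since one then covers the compact set $I_0$ by finitely many disks of radius $\rho_1/3$ and propagates the estimate along overlapping disks --- or, to obtain the stated inequality verbatim, by replacing each disk $D$ appearing above with $D\cap I_0$ and invoking the hypotheses that $I_0$ has nonempty interior and no isolated points to keep volume lower bounds of the form $\mathrm{Vol}(D\cap I_0)\geq c\,r^d$ near $\partial I_0$. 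With those adjustments the proof reduces to the Euclidean one.
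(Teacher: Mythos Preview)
Your proposal is correct and follows the same overall Garsia--Rodemich--Rumsey chaining strategy as the paper, but the mechanism for bounding consecutive links differs. The paper works with \emph{disk averages} $f_{D_k}=\mathrm{Vol}(D_k)^{-1}\int_{D_k}f\,dV$ and controls $|f_{D_k}-f_{D_{k-1}}|$ in one stroke via Jensen's inequality applied to the convex function $t\mapsto\exp(t^2/p(d_{k-1})^2)$ over $D_k\times D_{k-1}\subset D_{k-1}\times D_{k-1}$, whereas you select a chain of \emph{points} $z_n\in D_n$ by the double pigeonhole (Chebyshev) argument of the original one--dimensional GRR lemma. Both variants are standard; the averaging route is a little shorter since Jensen replaces the pigeonhole bookkeeping and the inductive good--set selection, while your pointwise route is closer to the original source and makes the role of the anchor $z_0$ explicit. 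The geometric input (the lower volume bound $\mathrm{Vol}(D)\geq c\,r^d$ yielding the exponent $u^{-2d}$) and the sum--to--integral step via $p(d_{k-1})=4[p(d_k)-p(d_{k+1})]$ are identical in both. Your treatment of the boundary technicality---disks near $\partial I_0$ possibly escaping $I_0$---is in fact more careful than the paper's, which simply stipulates $D_k\subset I_0$ without further comment.
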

\begin{proof}  Fix $x,y\in I_0$.  Then choose a sequence of disks $D_k=\{z\in M\,:\,d(z,x)<r_k\}$ such that $D_k\subset I_0$, $2r_1\leq d(x,y)$, $r_k\to0$, and if $d_k=2r_k$ we have \[p(d_k)=\frac12p(d_{k-1}).\]  
Let $f_{D_k}=\mbox{Vol}(D_k)^{-1}\int_{D_k}f\,dV$.  We apply Jensen's inequality to obtain \begin{align}\notag &exp\left(\frac{f_{D_k}-f_{D_{k-1}}}{p(d_{k-1})}\right)^2\\\notag&\qquad\leq [\mbox{Vol}(D_k)\mbox{Vol}(D_{k-1})]^{-1}\int_{D_k}\int_{D_{k-1}} exp\left(\frac{f(x)-f(y)}{p(d_{k-1})}\right)^2 \,dV(x)\,dV(y)\\\notag&\qquad\leq B[\mbox{Vol}(D_k)\mbox{Vol}(D_{k-1})]^{-1}.\end{align}   
We then have \be|f_{D_k}-f_{D_{k-1}}|\leq p(d_{k-1})\sqrt{\log(B[\mbox{Vol}(D_k)\mbox{Vol}(D_{k-1})]^{-1})}\ee
By the definition of $D_k$ we have \[p(d_{k-1})=4[p(d_k)-p(d_{k+1})].\] Then because \[\mbox{Vol}(D_k)=O\left((d_k)^d\right)\qquad \mbox {as } k\to\infty,\] $\exists$ $C>0$ such that \[\mbox{Vol}(D_k)\geq C(d_k)^d\] so that $d_{k+1}\leq u\leq d_k\Rightarrow u^{-2d}\leq C[\mbox{Vol}(D_k)\mbox{Vol}(D_{k-1})]^{-1}$.  Then we can write (4.1) as \[|f_{D_k}-f_{D_{k-1}}|\leq4\int_{d_{k+1}}^{d_k}\sqrt{\log(BCu^{-2d})}\,dp(u).\]
Summing these and using continuity of $f$ we get\[|f(x)-f_{D_1}|=\varlimsup_{k\to\infty}|f_{D_k}-f_{D_1}|\leq 4\int_0^{d_2}\sqrt{\log(BCu^{-2d})}\,dp(u).\]  
Now $d_2<d(x,y)$ so if we need to we can replace $B$ by a larger bound to ensure the integrand is defined, and after doing so we have \[|f(x)-f_{D_1}|\leq 4\int_0^{d(x,y)}\sqrt{\log(BCu^{-2d})}\,dp(u).\]  The argument is symmetric in $x$ and $y$, so an application of the triangle inequality yields the conclusion.

\end{proof}

Suppose now we are given a (centered) Gaussian random field $X_x$ over $(M,g)$ and consider its restriction to a compact set $I_0$ as above.  Suppose further that the function $K(x,y)=\E[X_xX_y]$ is continuous on $I_0\times I_0$.  Then $K(x,y)$ determines a positive trace class integral operator on $L^2(I_0,dV_g)$ and by Mercer's theorem we have \[K(x,y)=\sum_{k=0}^\infty \lambda_k\phi_k(x)\phi_k(y)\] uniformly on $I_0\times I_0$, where $\lambda_k$ and $\phi_k$ are the eigenvalues and eigenfunctions of $K$ respectively. 

Let $p(u)=\sup\{\sqrt{\E[|X_x-X_y|^2]}:d(x,y)\leq |u|\}$ and $X_x^n=\sum_{k=0}^n\sqrt{\lambda_k}\phi_k(x)\theta_k$ where the $\theta_k$ are independent standard normal random variables.

We then have the following adaptation of Garsia's theorem to the manifold setting:
\begin{thm}(Manifold version of Theorem 1 in \cite{MR0410880}):  Suppose that for $x,y\in I_0$ as above \[\int_0^{\mbox{diam}(I_0)\wedge1}\sqrt{-\log(u)}\,dp(u)<\infty.\]  Then with probability 1 \[|X^m_x-X^m_y|\leq \frac18\int_0^{d(x,y)}\sqrt{\log(BCu^{-2d})}\,dp(u)\] where $C>0$ and \[\sup_m\int_{I_0}\int_{I_0}exp\frac14\left(\frac{X^m_x-X^m_y}{p(d(x,y))}\right)^2\,dV(x)\,dV(y)\leq B<\infty\] almost surely.
In particular the partial sums $X^m_x$ are almost-surely equicontinuous and uniformly convergent on $I_0$.
\end{thm}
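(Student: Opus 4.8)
The plan is to deduce the stated modulus-of-continuity bound for each partial sum $X^m_\cdot$ from the analytic Lemma A.3, working throughout with the $X^m$ (which are manifestly continuous) rather than with a continuous version of the limit field, whose existence is exactly what is at stake; the one step that needs real care is the production of a single random constant $B$ bounding the double exponential integrals of all the $X^m$ at once. Since $K$ is continuous on the compact set $I_0\times I_0$, Mercer's theorem gives $K(x,y)=\sum_k\lambda_k\phi_k(x)\phi_k(y)$ uniformly with each $\phi_k$ continuous, so every path $X^m_\cdot$ is continuous on $I_0$; uniform continuity of $K$ also forces $p(u)\to0$ as $u\to0$, so $p$ meets the running hypotheses of Lemma A.3 (passing to a continuous nondecreasing majorant if one worries about continuity of the defining supremum). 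Next I record the variance estimate
\[
\sigma_m^2(x,y):=\E\!\left[|X^m_x-X^m_y|^2\right]=\sum_{k=0}^{m}\lambda_k|\phi_k(x)-\phi_k(y)|^2\le\E\!\left[|X_x-X_y|^2\right]\le p(d(x,y))^2 .
\]
Since $X^m_x-X^m_y$ is centered Gaussian, the elementary identity $\E[e^{sZ^2}]=(1-2s\tau^2)^{-1/2}$ for $Z\sim N(0,\tau^2)$, $2s\tau^2<1$, applied with $s=\tfrac{1}{4}\,p(d(x,y))^{-2}$ and $2s\sigma_m^2(x,y)\le\tfrac{1}{2}$ gives
\[
\E\!\left[\exp\tfrac{1}{4}\!\left(\frac{X^m_x-X^m_y}{p(d(x,y))}\right)^{2}\right]\le\sqrt{2}
\]
uniformly in $m,x,y$ (reading the integrand as $1$ where $p(d(x,y))=0$, which is consistent since there $\sigma_m\equiv0$). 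Setting $B_m:=\int_{I_0}\int_{I_0}\exp\tfrac{1}{4}\big(\tfrac{X^m_x-X^m_y}{p(d(x,y))}\big)^2\,dV(x)\,dV(y)$, Tonelli gives $\sup_m\E[B_m]\le\sqrt{2}\,\mbox{Vol}(I_0)^2<\infty$.

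To promote this uniform $L^1$ bound to the almost sure bound $\sup_m B_m<\infty$: for fixed $x,y$ the process $m\mapsto X^m_x-X^m_y=\sum_{k\le m}\sqrt{\lambda_k}(\phi_k(x)-\phi_k(y))\theta_k$ is an $L^2$-bounded martingale for $\mathcal{F}_m=\sigma(\theta_0,\dots,\theta_m)$, and since $z\mapsto e^{z^2/4}$ is convex, conditional Jensen makes $\exp\tfrac{1}{4}\big(\tfrac{X^m_x-X^m_y}{p(d(x,y))}\big)^2$ a nonnegative $\mathcal{F}_m$-submartingale, each term integrable by the estimate above. Integrating over $(x,y)\in I_0\times I_0$ and interchanging conditional expectation with the integral (conditional Tonelli), $B_m$ is itself a nonnegative $\mathcal{F}_m$-submartingale with $\sup_m\E[B_m]<\infty$; by Doob's submartingale convergence theorem $B_m\to B_\infty$ a.s.\ with $\E[B_\infty]<\infty$, hence $B_\infty<\infty$, hence $B:=\sup_m B_m<\infty$ almost surely, a convergent real sequence being bounded. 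Enlarging $B$ if necessary so that $BCu^{-2d}\ge1$ on $(0,\mbox{diam}(I_0)]$, with $C>0$ the geometric constant from Lemma A.3, completes this step, which I expect to be the one demanding the most thought; everything else is bookkeeping.

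On the probability-one event $\{\sup_m B_m\le B\}$ I would then apply Lemma A.3 to each continuous path $X^m_\cdot$ (after the harmless rescaling absorbing the factor $\tfrac{1}{4}$): for any closed metric disk $D\subset I_0$ and $x,y\in D$ one has $p(d(x,y))\le p(\mbox{diam}\,D)$ by monotonicity of $p$, so the hypothesis of Lemma A.3 holds with this $B$ and it yields
\[
|X^m_x-X^m_y|\le\tfrac{1}{8}\int_0^{d(x,y)}\sqrt{\log(BCu^{-2d})}\,dp(u)\qquad\text{for all }x,y\in I_0\text{ and all }m .
\]
The right-hand side is a modulus of continuity independent of $m$: it is finite and tends to $0$ as $d(x,y)\to0$ because $\sqrt{\log(BCu^{-2d})}\le\sqrt{2d}\,\sqrt{-\log u}+\sqrt{\log(BC)}$ for small $u$, while $\int_0^{\mbox{diam}(I_0)\wedge1}\sqrt{-\log u}\,dp(u)<\infty$ by hypothesis and $\int_0^{\epsilon}dp=p(\epsilon)<\infty$. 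Hence $\{X^m_\cdot\}$ is equicontinuous on $I_0$ with probability one. Finally, for each fixed $x$ the martingale $X^m_x$ is bounded in $L^2$, since $\sup_m\E[(X^m_x)^2]=K(x,x)<\infty$, and so converges almost surely; thus $X^m_\cdot$ converges pointwise a.s.\ on a countable dense subset of $I_0$, and this together with the almost sure equicontinuity and the compactness of $I_0$ gives, by the usual Arzel\`a--Ascoli argument, that $X^m_\cdot$ converges uniformly on $I_0$ almost surely, which is the last assertion of the theorem.
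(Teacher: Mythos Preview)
Your argument is correct and follows the same overall architecture as the paper's: show that the exponential integrands form a nonnegative submartingale in $m$, parlay this into an almost-sure bound $\sup_m B_m<\infty$, and then invoke Lemma~A.2 to obtain the uniform modulus of continuity. The differences are only in execution. For the almost-sure bound, the paper works pointwise in $(x,y)$ with $P_m=\exp\tfrac18(\cdot)^2$, applies Doob's $L^2$ maximal inequality $\E[\max_{m\le n}P_m^2]\le 4\E[P_n^2]\le4\sqrt2$, integrates over $I_0\times I_0$, and passes $n\to\infty$ by monotone convergence; you instead integrate first, observe that $B_m$ itself is an $L^1$-bounded submartingale, and use Doob's convergence theorem---a slightly slicker route to the same conclusion. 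For the uniform convergence, the paper appeals to $\sum_k\lambda_k\theta_k^2<\infty$ a.s.\ (which follows from $\sum_k\lambda_k=\int_{I_0}K(x,x)\,dV<\infty$) together with the equicontinuity, whereas you use $L^2$-bounded martingale convergence of $X^m_x$ on a countable dense set plus Arzel\`a--Ascoli; both are standard and either suffices.
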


\begin{proof}  Let \[P_n=exp\frac18\left(\frac{X^n_x-X^n_y}{p(d(x,y))}\right)^2=P_{n-1}exp\frac18\left(\frac{(Y^n(x,y))^2-2Y^n(x,y)(X^{n-1}_x-X^{n-1}(y))}{p(d(x,y))}\right)^2\] where $Y^k(x,y)=\sqrt{\lambda_k}(\phi_k(x)-\phi_k(y))\theta_k$.  Then by independence of the $\theta_k$ and Jensen's inequality for conditional expectation \begin{align}\notag&\E[P_{n+1}\left|P_n,...,P_1\right.]\\\notag&\qquad=P_n\left(\E\left[\left.exp\frac18\left(\frac{X^{n+1}_x-X^{n+1}_y}{p(d(x,y))}\right)^2\right|P_n,...,P_1\right]\right)\\\notag&\qquad\geq P_n exp\frac18\left(\E\left[\left.\left(\frac{(Y^{n+1}(x,y))^2-2Y^{n+1}(x,y)(X^{n-1}_x-X^{n-1}(y))}{p(d(x,y))}\right)\right|P_n,...,P_1\right]\right)^2\\\notag&\qquad=P_nexp\frac18\left(\E\left[\left.\left(\frac{(Y^{n+1}(x,y))^2}{p(d(x,y))}\right)\right|P_n,...,P_1\right]\right)^2\\\notag&\qquad\geq P_n\qquad a.s.
\end{align}

Thus $\{P_n\}$ is a submartingale.  Next note that $\E[P_n^2]\leq\sqrt2$, as \[\frac{X^n_x-X^n_y}{p(d(x,y))}\] is centered, Gaussian, and has variance less than or equal to one.  Then applying the classical submartingale inequalities we have \[\E[\max_{m\leq n}P_m^2]\leq4\E[P_n^2]\leq4\sqrt2.\]

Applying the Fubini-Tonelli theorem we then have \[\E\left(\int_{I_0}\int_{I_0}\max_{m\leq n}exp\frac14\left(\frac{X^n_x-X^n_y}{p(d(x,y))}\right)^2\,dV(x)\,dV(y)\right)\leq4\sqrt2\left(V(I_0)\right)^2.\]  Letting $n$ tend to infinity and applying monotone converge yields \[\E[B]\leq4\sqrt2\left(V(I_0)\right)^2<\infty.\]  

We then have that almost surely \[\int_{I_0}\int_{I_0}exp\frac14\left(\frac{X^n_x-X^n_y}{p(d(x,y))}\right)^2\,dV(x)\,dV(y)\leq B<\infty\qquad\forall\, n\] so that Lemma A.2 applies.

Lastly note that from \[\E\left[\sum_{k=0}^\infty\lambda_k\theta_k^2\right]=\sum_{k=0}^\infty\lambda_k=\int_{I_0}K(x,x)\,dV(x)<\infty\] we obtain with probability one \[\sum_{k=0}^\infty \lambda_k\theta_k^2<\infty,\]  which together with the conclusion of Lemma A.2 implies the almost sure uniform convergence of $\{X_x^n\}$ on $I_0$.

\end{proof}

As remarked in \cite{MR0410880} this result gives a sufficient condition for the existence of an almost surely continuous version of $X_x$.  The next result establishes H\"{o}lder continuity. 

\begin{thm}(Manifold version of Thm 8.3.2 in \cite{MR611857}):  Let the field $X$ over $I_0\subset M$ be as above and let $\gamma=\sup\{\beta:\E[|X_x-X_y|^2]=o(d(x,y)^{2\beta})\,\mbox{uniformly on }I_0\}$.  Then there exists a version of $X$ with sample paths that are almost surely uniformly H\"{o}lder continuous over $I_0$ of any order $\beta<\gamma$.

\end{thm}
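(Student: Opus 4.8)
The plan is to derive this from the manifold version of Garsia's theorem (Theorem A.3) together with a routine estimate of the log-integral bound it produces. We may assume $0<\beta<\gamma$, the case $\beta\le0$ being vacuous. Since $\E[|X_x-X_y|^2]=o(d(x,y)^{2\beta'})$ uniformly on $I_0$ implies the same with $\beta'$ replaced by any smaller positive exponent (as $d$ is bounded on the compact set $I_0$), the definition of $\gamma$ furnishes a $\beta'$ with $\beta<\beta'<\gamma$ and $\E[|X_x-X_y|^2]=o(d(x,y)^{2\beta'})$ uniformly. Hence the function $p(u)=\sup\{\sqrt{\E[|X_x-X_y|^2]}:d(x,y)\le|u|\}$ --- which is even, continuous (by uniform continuity of the covariance $K$ on the compact set $I_0\times I_0$), monotone in $|u|$, and vanishes at $u=0$, so that the standing hypotheses on $p$ for Lemma A.2 and Theorem A.3 hold --- satisfies $p(u)=o(u^{\beta'})$, and in particular $p(u)\le u^{\beta'}$ for all sufficiently small $u>0$.

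First I would check the integrability hypothesis of Theorem A.3, $\int_0^{\mbox{diam}(I_0)\wedge1}\sqrt{-\log u}\,dp(u)<\infty$: integration by parts reduces this, up to a boundary term that vanishes because $u^{\beta'}\sqrt{-\log u}\to0$ as $u\to0$, to an integral of the form $\int_0^\delta p(u)\big/\big(2u\sqrt{-\log u}\big)\,du$, which is finite since $p(u)\le u^{\beta'}$ near $0$ and $\beta'>0$. Theorem A.3 then supplies a version of $X$ --- namely the almost sure uniform limit $X^\infty_x$ of the partial sums $X^n_x=\sum_{k=0}^n\sqrt{\lambda_k}\phi_k(x)\theta_k$, which is a version of $X$ because, by Mercer's theorem, its covariance is $\sum_k\lambda_k\phi_k(x)\phi_k(y)=K(x,y)$ --- for which, with probability one,
\[ |X^\infty_x-X^\infty_y|\le\frac18\int_0^{d(x,y)}\sqrt{\log(BCu^{-2d})}\,dp(u)\qquad\mbox{for all }x,y\in I_0, \]
where $C>0$ is a constant and $B<\infty$ almost surely. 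Note that this version does not depend on the choice of $\beta$.

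Next I would estimate the right-hand side for $\rho:=d(x,y)$ small. Since $\log(BCu^{-2d})/\log(1/u)\to2d$ as $u\to0$, on the event $\{B<\infty\}$ there is a random constant $C_B$ with $\sqrt{\log(BCu^{-2d})}\le C_B\sqrt{\log(1/u)}$ for all small $u>0$, so it suffices to bound $\int_0^\rho\sqrt{\log(1/u)}\,dp(u)$. Integrating by parts once more and using $p(u)\le u^{\beta'}$, this quantity is at most $\sqrt{\log(1/\rho)}\,\rho^{\beta'}+\int_0^\rho u^{\beta'-1}\big/\big(2\sqrt{\log(1/u)}\big)\,du=O\!\big(\rho^{\beta'}\sqrt{\log(1/\rho)}\big)=O(\rho^{\beta})$, the last equality because any power of $\log(1/\rho)$ is dominated by $\rho^{-(\beta'-\beta)}$ as $\rho\to0$. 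Thus there is a finite random constant $K_\omega$ with $|X^\infty_x-X^\infty_y|\le K_\omega\,d(x,y)^\beta$ whenever $d(x,y)$ is small enough; since $I_0$ is compact and the sample paths of $X^\infty$ are continuous, hence bounded, this upgrades to $|X^\infty_x-X^\infty_y|\le K'_\omega\,d(x,y)^\beta$ for all $x,y\in I_0$. Finally, intersecting over a sequence $\beta_j\uparrow\gamma$ the probability-one events so obtained yields a single version that is almost surely uniformly H\"{o}lder continuous on $I_0$ of every order $\beta<\gamma$.

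The only step calling for any care is the third one --- extracting the H\"{o}lder exponent from Garsia's bound $\frac18\int_0^{d(x,y)}\sqrt{\log(BCu^{-2d})}\,dp(u)$ --- and even this is routine once one records that $p(u)=o(u^{\beta'})$ and that powers of $\log(1/u)$ grow more slowly than every positive power of $1/u$. All of the genuine probabilistic content --- the submartingale estimate, the Fubini--Tonelli argument, and the almost sure finiteness of $B$ --- is already present in Theorem A.3, so the theorem above is essentially a corollary of it.
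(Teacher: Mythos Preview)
Your proof is correct and follows essentially the same route as the paper: verify the integrability hypothesis of Theorem~A.3 using $p(u)=O(u^{\beta'})$ for $\beta'<\gamma$, apply A.3 to obtain the Garsia bound, and then estimate that bound to extract the H\"older exponent. The differences are only in presentation: you use integration by parts explicitly where the paper alludes to the substitution $u=e^{-x^2}$, and you are more careful than the paper about noting that a single version works for all $\beta<\gamma$ and about upgrading from small $d(x,y)$ to all of $I_0$ via compactness.
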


\begin{proof}  Let $\rho=d(x,y)$.  First note that, with $p(u)$ as above, we have for any $L>0$\[\int_L^\infty p(e^{-x^2})\,dx\leq c_\epsilon\int_L^\infty e^{-(\gamma-\epsilon)x^2}\,dx<\infty\] for any $0<\epsilon<\gamma$ and some constant $c_\epsilon$.  But this is equivalent to \[\int_0^{\mbox{diam}(I_0)\wedge1}\sqrt{-\log(u)}\,dp(u)<\infty.\]  Thus by the previous result we have a version (which we also denote by $X$) for which \[|X_x-X_y|\leq Bp(\rho)+ C\int_0^{\rho}\sqrt{-\log(u)}\,dp(u)\qquad a.s.\] for some constant $C>0$ and some positive random variable $B$ almost surely finite.

Now for any $0<\epsilon<\gamma$ we have some constant $C_\epsilon>0$ such that $p(\rho)<C_\epsilon \rho^{\gamma-\epsilon}$, and similarly $\int_0^{\rho}\sqrt{-\log(u)}\,dp(u)<C^\prime_\epsilon\rho^{\gamma-\epsilon}$ for some $C^\prime_\epsilon>0$.  Thus, with probability 1, for each $\epsilon>0$ there is an almost surely finite positive random variable $A_\epsilon$ such that \[|X_x-X_y|\leq A_\epsilon d(x,y)^{\gamma-\epsilon}\qquad\forall\, x,y\in I_0.\]

\end{proof}

Note that we can also show under the hypotheses of the theorem that in any disk of positive radius in $I_0$ the sample paths of $X$ fail to be uniformly H\"{o}lder of any order greater than $\gamma$.  Indeed, \[\frac{X_x-X_y}{d(x,y)^{\gamma+\epsilon}}\] is a centered Gaussian random variable with variance $O(d(x,y)^{-\frac{\epsilon}2})$ and thus becomes almost surely unbounded as $x\to y$.   For example we can pick any countable dense subset of $I_0$ and modify $X$ on a set of measure zero to obtain the failure of H\"{o}lder continuity at each point in the set. Any stronger converse statement will require more refined tools, i.e., local times, which we will not attempt to develop here.

\begin{rmk} We mention here that the results in \cite{MR2604006}, of which the author became aware after submission of the present article, may be an alternative to the results above for establishing sample path continuity in Theorem 3.6.
\end{rmk}
\bibliographystyle{plain}
\bibliography{Thesisbib}

\def\cprime{$'$} \def\cprime{$'$}
\begin{thebibliography}{10}

\bibitem{MR611857}
R.~J. Adler.
\newblock {\em The geometry of random fields}.
\newblock John Wiley \& Sons Ltd., Chichester, 1981.
\newblock Wiley Series in Probability and Mathematical Statistics.

\bibitem{MR2319516}
R.~J. Adler and J.~E. Taylor.
\newblock {\em Random fields and geometry}.
\newblock Springer Monographs in Mathematics. Springer, New York, 2007.

\bibitem{MR1364671}
A.~Benassi.
\newblock Locally self-similar {G}aussian processes.
\newblock In {\em Wavelets and statistics ({V}illard de {L}ans, 1994)}, volume
  103 of {\em Lecture Notes in Statist.}, pages 43--54. Springer, New York,
  1995.

\bibitem{MR1462329}
A.~Benassi, S.~Jaffard, and D.~Roux.
\newblock Elliptic {G}aussian random processes.
\newblock {\em Rev. Mat. Iberoamericana}, 13(1):19--90, 1997.

\bibitem{MR2053563}
A.~Benassi and D.~Roux.
\newblock Elliptic self-similar stochastic processes.
\newblock {\em Rev. Mat. Iberoamericana}, 19(3):767--796, 2003.

\bibitem{MR0282313}
M.~Berger, P.~Gauduchon, and E.~Mazet.
\newblock {\em Le spectre d'une vari\'et\'e riemannienne}.
\newblock Lecture Notes in Mathematics, Vol. 194. Springer-Verlag, Berlin,
  1971.

\bibitem{MR768584}
I.~Chavel.
\newblock {\em Eigenvalues in {R}iemannian geometry}, volume 115 of {\em Pure
  and Applied Mathematics}.
\newblock Academic Press Inc., Orlando, FL, 1984.
\newblock Including a chapter by Burton Randol, With an appendix by Jozef
  Dodziuk.

\bibitem{MR1849187}
I.~Chavel.
\newblock {\em Isoperimetric inequalities}, volume 145 of {\em Cambridge Tracts
  in Mathematics}.
\newblock Cambridge University Press, Cambridge, 2001.
\newblock Differential geometric and analytic perspectives.

\bibitem{MR658471}
J.~Cheeger, M.~Gromov, and M.~Taylor.
\newblock Finite propagation speed, kernel estimates for functions of the
  {L}aplace operator, and the geometry of complete {R}iemannian manifolds.
\newblock {\em J. Differential Geom.}, 17(1):15--53, 1982.

\bibitem{MR615626}
J.~Cheeger and S.~T. Yau.
\newblock A lower bound for the heat kernel.
\newblock {\em Comm. Pure Appl. Math.}, 34(4):465--480, 1981.

\bibitem{MR1023321}
E.~B. Davies.
\newblock Pointwise bounds on the space and time derivatives of heat kernels.
\newblock {\em J. Operator Theory}, 21(2):367--378, 1989.

\bibitem{MR1103113}
E.~B. Davies.
\newblock {\em Heat kernels and spectral theory}, volume~92 of {\em Cambridge
  Tracts in Mathematics}.
\newblock Cambridge University Press, Cambridge, 1990.

\bibitem{MR1209255}
E.~B. Davies.
\newblock The state of the art for heat kernel bounds on negatively curved
  manifolds.
\newblock {\em Bull. London Math. Soc.}, 25(3):289--292, 1993.

\bibitem{MR711862}
J.~Dodziuk.
\newblock Maximum principle for parabolic inequalities and the heat flow on
  open manifolds.
\newblock {\em Indiana Univ. Math. J.}, 32(5):703--716, 1983.

\bibitem{MR0181836}
A.~Friedman.
\newblock {\em Partial differential equations of parabolic type}.
\newblock Prentice-Hall Inc., Englewood Cliffs, N.J., 1964.

\bibitem{MR0215331}
R.~Gangolli.
\newblock Positive definite kernels on homogeneous spaces and certain
  stochastic processes related to {L}\'evy's {B}rownian motion of several
  parameters.
\newblock {\em Ann. Inst. H. Poincar\'e Sect. B (N.S.)}, 3:121--226, 1967.

\bibitem{MR0410880}
A.~M. Garsia.
\newblock Continuity properties of {G}aussian processes with multidimensional
  time parameter.
\newblock In {\em Proceedings of the {S}ixth {B}erkeley {S}ymposium on
  {M}athematical {S}tatistics and {P}robability ({U}niv. {C}alifornia,
  {B}erkeley, {C}alif., 1970/1971), {V}ol. {II}: {P}robability theory}, pages
  369--374, Berkeley, Calif., 1972. Univ. California Press.

\bibitem{MR1443330}
A.~Grigor{\cprime}yan.
\newblock Gaussian upper bounds for the heat kernel on arbitrary manifolds.
\newblock {\em J. Differential Geom.}, 45(1):33--52, 1997.

\bibitem{MR2569498}
A.~Grigor{\cprime}yan.
\newblock {\em Heat kernel and analysis on manifolds}, volume~47 of {\em AMS/IP
  Studies in Advanced Mathematics}.
\newblock American Mathematical Society, Providence, RI, 2009.

\bibitem{MR1855636}
P.~G. Hjorth, S.~L. Kokkendorff, and S.~Markvorsen.
\newblock Hyperbolic spaces are of strictly negative type.
\newblock {\em Proc. Amer. Math. Soc.}, 130(1):175--181 (electronic), 2002.

\bibitem{MR2198600}
J.~Istas.
\newblock Spherical and hyperbolic fractional {B}rownian motion.
\newblock {\em Electron. Comm. Probab.}, 10:254--262 (electronic), 2005.

\bibitem{MR2266715}
J.~Istas.
\newblock On fractional stable fields indexed by metric spaces.
\newblock {\em Electron. Comm. Probab.}, 11:242--251 (electronic), 2006.

\bibitem{MR1474726}
S.~Janson.
\newblock {\em Gaussian {H}ilbert spaces}, volume 129 of {\em Cambridge Tracts
  in Mathematics}.
\newblock Cambridge University Press, Cambridge, 1997.

\bibitem{MR1482041}
N.~C. Lee.
\newblock Estimates for heat kernel and {G}reen's function on certain manifolds
  with {R}icci curvature bounded below.
\newblock In {\em Tsing {H}ua lectures on geometry \& analysis ({H}sinchu,
  1990--1991)}, pages 247--258. Int. Press, Cambridge, MA, 1997.

\bibitem{MR0190953}
P.~L{\'e}vy.
\newblock {\em Processus stochastiques et mouvement brownien}.
\newblock Suivi d'une note de M. Lo\`eve. Deuxi\`eme \'edition revue et
  augment\'ee. Gauthier-Villars \& Cie, Paris, 1965.

\bibitem{MR847950}
P.~Li.
\newblock Large time behavior of the heat equation on complete manifolds with
  nonnegative {R}icci curvature.
\newblock {\em Ann. of Math. (2)}, 124(1):1--21, 1986.

\bibitem{MR0266100}
H.~P. McKean.
\newblock An upper bound to the spectrum of {$\Delta $} on a manifold of
  negative curvature.
\newblock {\em J. Differential Geometry}, 4:359--366, 1970.

\bibitem{MR913723}
G.~M. Molchan.
\newblock Multiparameter {B}rownian motion.
\newblock {\em Teor. Veroyatnost. i Mat. Statist.}, (36):88--101, 141, 1987.

\bibitem{MR2604006}
J{\"u}rgen Potthoff.
\newblock Sample properties of random fields. {II}. {C}ontinuity.
\newblock {\em Commun. Stoch. Anal.}, 3(3):331--348, 2009.

\bibitem{MR2657840}
Y.~Shi and B.~Xu.
\newblock Gradient estimate of an eigenfunction on a compact {R}iemannian
  manifold without boundary.
\newblock {\em Ann. Global Anal. Geom.}, 38(1):21--26, 2010.

\bibitem{MR705991}
R.~S. Strichartz.
\newblock Analysis of the {L}aplacian on the complete {R}iemannian manifold.
\newblock {\em J. Funct. Anal.}, 52(1):48--79, 1983.

\bibitem{MR2526794}
X.~Xu.
\newblock Gradient estimates for the eigenfunctions on compact manifolds with
  boundary and {H}\"ormander multiplier theorem.
\newblock {\em Forum Math.}, 21(3):455--476, 2009.

\bibitem{MR893393}
A.~M. Yaglom.
\newblock {\em Correlation theory of stationary and related random functions.
  {V}ol. {I}}.
\newblock Springer Series in Statistics. Springer-Verlag, New York, 1987.
\newblock Basic results.

\end{thebibliography}
\end{document}